\newtheorem{theorem}{Theorem}[section]
\newtheorem{lemma}[theorem]{Lemma}
\theoremstyle{definition}
\theoremstyle{remark}
\newtheorem{remark}[theorem]{Remark}
\numberwithin{equation}{section}
\begin{document}
\setcounter{page}{1}

\title[Oscillating singular integrals on  Lie groups of pol. growth]{Boundedness of oscillating singular integrals on Lie groups of  polynomial growth}

\author[D. Cardona]{Duv\'an Cardona}
\address{
  Duv\'an Cardona:
  \endgraf
  Department of Mathematics: Analysis, Logic and Discrete Mathematics
  \endgraf
  Ghent University, Belgium
  \endgraf
  {\it E-mail address} {\rm duvanc306@gmail.com, duvan.cardonasanchez@ugent.be}
  }
  
\author[M. Ruzhansky]{Michael Ruzhansky}
\address{
  Michael Ruzhansky:
  \endgraf
  Department of Mathematics: Analysis, Logic and Discrete Mathematics
  \endgraf
  Ghent University, Belgium
  \endgraf
 and
  \endgraf
  School of Mathematical Sciences
  \endgraf
  Queen Mary University of London
  \endgraf
  United Kingdom
  \endgraf
  {\it E-mail address} {\rm michael.ruzhansky@ugent.be, m.ruzhansky@qmul.ac.uk}
  }

\thanks{The authors are supported  by the FWO  Odysseus  1  grant  G.0H94.18N:  Analysis  and  Partial Differential Equations and by the Methusalem programme of the Ghent University Special Research Fund (BOF)
(Grant number 01M01021). Michael Ruzhansky is also supported  by EPSRC grant 
EP/R003025/2.
}

     \keywords{Calder\'on-Zygmund operator, Oscillating singular integrals, Lie group of polynomial growth, Graded Lie groups}
     \subjclass[2010]{35S30, 42B20; Secondary 42B37, 42B35}

\begin{abstract} We investigate the boundedness of oscillating singular integrals on Lie groups of polynomial growth in order to extend the classical oscillating conditions due to Fefferman and Stein for the boundedness of oscillating convolution operators. Kernel criteria are presented in terms of a  fixed sub-Riemannian structure on the group induced by a sub-Laplacian associated to a  H\"ormander system of vector fields. In the case where the group is graded, kernel criteria are presented in terms of the Fourier analysis associated to an arbitrary Rockland operator. 
\end{abstract} 

\maketitle

\tableofcontents
\allowdisplaybreaks

\section{Introduction}

\subsection{Outline and historical remarks}

In this paper we study the boundedness of oscillating singular integrals on a connected Lie group $G$  of polynomial growth. We address our investigation by considering  two settings. 
\begin{itemize}
    \item First, we  consider on $G$ a positive sub-Laplacian $\mathcal{L}=-\sum_{i=1}^kX_i^2$ associated with a systems of vector fields $X=\{X_i\}_{i=1}^k$ satisfying the H\"ormander condition. Then, our kernel criteria for the boundedness of operators are presented in terms of the sub-Laplacian $\mathcal{L}$ and of the kernel of the operator.
    \item Later, we consider the case where    $G$ is a  graded Lie group, but  generalising the sub-Laplacian context, the kernel criteria are presented in terms of the Fourier analysis associated with an arbitrary Rockland operator $\mathcal{R}$.
\end{itemize}
In both cases, implicitly, our arguments make use of the hypoellipticity of the sub-Laplacian $\mathcal{L}$ and of the Rockland operator $\mathcal{R}$  in view of  the H\"ormander theorem of the  sums of squares \cite{Hormander1967}, and of the Helffer and Nourrigat solution of the Rockland conjecture \cite{HelfferNourrigat}, respectively.  

As a consequence of our investigation, further applications can be obtained for many classes of Lie groups of interest in analysis and geometry e.g. the Heisenberg group, the Cartan group, the Engel group,  other stratified groups, arbitrary compact Lie groups such as the torus $\mathbb{T}^n,$  $\textnormal{SU}(2)\cong \mathbb{S}^3,$   $\textnormal{SO}(3),$ graded Lie groups, and general Lie groups of polynomial growth.

In analysis, the theory of singular integrals has a long tradition (see e.g.  Mihlin \cite{Mihlin} and references therein).  This class of operators was introduced in the study of elliptic problems, for instance, its importance in the proof of  the celebrated Calder\'on theorem on the uniqueness of the Cauchy problem has given testimony of its relevant applications to the theory of PDE, see \cite{Calderon1958} for details.  On the other hand, the program of Calder\'on and  Zygmund (which started with their memoir \cite{CalderonZygmund1952}) was dedicated to generalising several of the fundamental results of the one-dimensional harmonic analysis to higher dimensions (as the weak (1,1)-boundedness theorem for the Hilbert tranform due to Kolmogorov, and its $L^p$-boundedness theorem due to M. Riesz).   Being the  Hilbert transform, the fundamental one-dimensional example of a singular integral,  Calder\'on and Zygmund in \cite{CalderonZygmund1952} investigated convolution operators of the form
\begin{equation}\label{CZ:1952}
    Tf(x)=\textnormal{p.v.}\smallint_{\mathbb{R}^n}K(x-y)f(y)dy,
\end{equation}where the kernel $K,$ having some regularity properties, is homogeneous of degree $-n,$ and satisfies the cancellation property  $\smallint_{|x|=1}K(x)d\sigma(x)=0.$ Here, $d\sigma$ is the surface density on the unit sphere.

A well known condition for the boundedness properties of singular integrals is the one due to H\"ormander  \cite{Hormander1960}, where the distributional kernels $K$ are taken satisfying  a ``smoothness condition" of the form
\begin{eqnarray}\label{HC}
& [K]_{H_\infty}:= \sup_{0<R<1}\sup_{|y|<R}\smallint_{|x|\geq 2R}|K(x-y)-K(x)|dx <\infty.  
\end{eqnarray}
Indeed, the H\"ormander condition in \eqref{HC} together with the  Fourier transform condition $\widehat{K}\in L^\infty$ give sufficient conditions for the weak (1,1) boundedness of $T$ (as in \ref{CZ:1952}) and also for its $L^p$-boundendness in the range $1<p<\infty$.

Other conditions generalising the one by  H\"ormander, arise in the analysis of  oscillating multipliers (see Hardy \cite{Hardy1913}, Hirschman \cite{Hirschman1956} and Wainger \cite{Wainger1965}). By a suggestion of Stein,  Fefferman in  \cite{Fefferman1970} has considered  distributions satisfying the ``oscillating smoothness" condition
\begin{equation}\label{FeffCond}
   [K]_{H_{\infty},\theta}:=  \sup_{0<R<1}\sup_{|y|<R} \smallint\limits_{|x|\geq 2R^{1-\theta}}|K(x-y)-K(x)|dx   <\infty.
\end{equation} Indeed, roughly speaking, if $K$ satisfies \eqref{FeffCond} and its Fourier transform has decay
\begin{equation}\label{decay}
    |\widehat{K}(\xi)|=O((1+|\xi|)^{-\frac{n\theta}{2}}),\quad 0\leq \theta<1,
\end{equation}
Fefferman's theorem says that $T$ admits a bounded extension of weak (1,1) type. Obviously, with $\theta=0,$ Fefferman's condition agrees with the one in \eqref{HC} by H\"ormander. Observe that Fefferman's approach illustrates the delicate relationship between the condition on $K$ and its Fourier transform $\widehat{K}.$

Further extensions of the Calder\'on-Zygmund theory to non-commutative structures were developed e.g. by Coifman and Weiss in \cite{CoifmanWeiss} and by Coifman and De Guzm\'an in  \cite{CoifmandeGuzman} for spaces of homogeneous type just to mention a few.

\subsection{The main result}

 In this work we prove that the Fefferman conditions for oscillating kernels beget $L^p$-bounded operators not only in the Euclidean setting, but also in  general non-commutative structures, namely, connected Lie groups of polynomial growth. In particular, for  graded Lie groups (which are also Lie groups of polynomial growth but with an extra structure of dilations on its graded Lie algebra) we will use the existence of a general class of hypoelliptic differential operators that in this context are called Rockland operators, see e.g. Helfer and Nourrigat \cite{HelfferNourrigat} or \cite[Chapter IV]{FischerRuzhanskyBook} for details.  

In order to present our main result (see Theorem \ref{main:th:2} below), we introduce some preliminary notions. If the Lie group is of type I (e.g. any compact Lie group, any graded Lie group, or in general homogeneous Lie groups) we denote by $\widehat{G}$  the unitary dual of $G,$ which consists of all equivalence classes of unitary and irreducible representations on $G.$  In this setting, $[\xi]$ denotes an equivalence class in the unitary dual $\widehat{G}$. The representation spaces $H_\xi$ associated to $\xi:G\rightarrow \mathcal{B}(H_\xi)$ are separable Hilbert spaces (with finite dimension in the case where $G$ is compact). Here, $\mathcal{B}(H_\xi)$ denotes the space of the bounded linear operators on $H_\xi.$  The representation $\xi$ becomes operator-valued, and then, the Fourier transform of a distribution $K$ on $G,$ is given by
\begin{equation}
    \widehat{K}(\xi)=\smallint_{G}K(x)\xi(x)^*dx:H_\xi\rightarrow H_\xi.
\end{equation}
As usually, $dx$ denotes the left-invariant Haar measure associated to $G.$

In the case of a graded Lie group, the unitary dual $\widehat{G}$ becomes to be a purely continuous set, and by following the standard notation for the Fourier analysis on graded Lie groups (taken from the formalism of $C^*$-algebras)  the equivalence classes of unitary representations will be denoted by $[\pi]\in \widehat{G}$ (instead of $[\xi]$). Also, in the  setting of graded Lie groups the class $[\pi]$ is typically identified with a representation $\pi$ of the class. The representation space $H_\pi$ is $\pi$-a.e. a separable Hilbert space of infinite dimension. Nevertheless, for the variety of examples  presented in Section  \ref{Examples}, e.g. in the case of compact Lie groups, the notation $[\xi]\in \widehat{G}$ will be reserved.

On an arbitrary Lie group of polynomial growth, we will show (as in Fefferman's criterion above) that the Fourier transform condition \eqref{decay} on the kernel $K,$ can be replaced by the $L^2$-boundendess of the operator in \eqref{L:2:hypo} below,  and that the following  conditions  
\begin{equation}\label{c1:intro}
        [K]_{H_{\infty,\theta}}:=\sup_{0<R<1}\sup_{|y|<R} \smallint\limits_{|x|\geq 2R^{1-\theta}}|K(xy^{-1})-K(x)|dx   <\infty,
    \end{equation} and     
    \begin{equation}\label{c2:intro}
        [K]_{H_{\infty,\theta}}':=\sup_{0<R<1}\sup_{|y|<R}  \smallint\limits_{|x|\geq 2R^{1-\theta}}|K(y^{-1}x)-K(x)|dx  <\infty,
    \end{equation} can be used to study the $L^p$-boundedness of the corresponding singular integral  operator $Tf:=K\ast f.$ We have denoted by $$B(R)=\{x\in G:|x|<R\}$$  the ball of radius $R>0$ centred at the identity and by $|B(R)|:=\textnormal{Vol}(B(R))$  its Haar measure. 
    
    The distance $|\cdot|$ used in \eqref{c1:intro} or in \eqref{c2:intro} depends of the geometry of the group $G$ under consideration. Indeed, for Lie groups of polynomial growth endowed with a H\"ormander sub-Laplacian the natural  distance is the one defined by using horizontal curves. On the another hand, if the group is  graded we chose a quasi-distance on the group compatible with its structure of dilations.

We record that if $|\cdot|$ is the Carnot-Carath\'eodory distance on a Lie group of polynomial growth $G,$ (see Coulhon,  Saloff-Coste, and Varopoulos \cite{Saloff-CosteBook}) then there exist $d=d_0$ and $d'=d_\infty$ such that 
    
\begin{equation}
    \forall R\in [0,1],\, |B(x,R)|\sim R^{d_0},\,\forall R\geq 1,\, |B(x,R)|\sim R^{d_\infty},
\end{equation} where $B(x,R)$ is the ball of radius $R>0$ associated to the distance $|\cdot|.$   Then $d_0$ and $d_\infty$ are called  the local dimension and the global dimension of the pair $(G,X),$ respectively. 

It can be proved that the global dimension depends only of the structure of the group and it is independent of the system $X,$ see Coulhon,  Saloff-Coste, and Varopoulos \cite{Saloff-CosteBook} for details. However, the local dimension $d_0$ depends of both, of the group $G$ and of the vector fields  system $X.$ One of the main aspects of our work, is that our kernel criteria are presented in terms of the local dimension $d_0.$

When asking how to generalise the condition \eqref{decay} to the case of  Lie groups of polynomial growth, one can give a possible solution using the Fourier transform of the group $G$ if it is a Lie group of type I, see Subsection \ref{LGPGTI} for details. Nevertheless, the hypothesis that the  Lie group $G$ is of type $I$ can be removed if we consider the group to be endowed with a sub-Riemannian structure induced by a family of  vector-fields $\{X_j\}_{j=1}^k,$ satisfying the H\"ormander condition,  that is, the family of vector-fields $\{X_j\}_{j=1}^k$ and their iterated commutators generate the Lie algebra $\mathfrak{g}\sim T_{e}G$ of $G$. In this general setting, \eqref{decay} can be extended to general Lie groups of polynomial growth just by assuming that the operators
\begin{equation}\label{L:2:hypo}
      (1+{\mathcal{L}})^{\frac{d_{0}\theta}{4}}T,\,  T(1+{\mathcal{L}})^{\frac{d_{0}\theta}{4}}:L^2(G)\rightarrow L^2(G),
    \end{equation} admit   bounded extensions.

Now, when $G$ is a graded Lie group, the Euclidean  condition \eqref{decay} can be extended by using the infinitesimal representation of any Rockland operator. If $G$ is a stratified Lie group the prototype of Rockland operators are sub-Laplacians $\mathcal{L}=-\sum_{i=1}^kX_i^2.$ However, Rockland operators may have an arbitrary order. For instance, on the Heisenberg group $\mathbb{H}_n\sim \mathbb{R}^{2n+1}$ instead of the positive sub-Laplacian  $\mathcal{L}=-\sum_{i=1}^n(X_i^2+Y_i^2)$ we can choose e.g. the positive Rockland operator $\mathcal{R}=\sum_{i=1}^n(X_i^4+Y_i^4)$ of fourth order. For   examples of Rockland operators with an arbitrary order we refer the reader to Corollary 4.1.8 of \cite{FischerRuzhanskyBook}.

If we fix a  Rockland operator $\mathcal{R}$ and if $\tau$ is the right-convolution kernel associated with $\mathcal{R},$ $\mathcal{R}f=f\ast \tau,$ then
\begin{equation}\label{pi:R}
  \pi(\mathcal{R}):=\widehat{\tau}(\pi)\equiv d\pi(\mathcal{R})
\end{equation}
 denotes its Fourier transform, and  the condition in  \eqref{decay} takes in the  setting of a graded Lie group $G\cong \mathfrak{g}_0\times \mathfrak{g}_1\times \cdots \mathfrak{g}_s $ the following form 
$$ \sup_{\pi\in \widehat{G}}\Vert \widehat{K}(\pi) (1+\pi(\mathcal{R}))^{\frac{Q\theta}{2\nu}}\Vert_{\textnormal{op}},\,  \sup_{\pi\in \widehat{G}}\Vert (1+\pi(\mathcal{R}))^{\frac{Q\theta}{2\nu}}\widehat{K}(\pi) \Vert_{\textnormal{op}}<\infty,  $$
where $\nu=\nu_{\mathcal{R}}>0$ denotes the homogeneity degree of $\mathcal{R}.$ As before,  $\widehat{K}(\pi)$ denotes the Fourier transform of a distribution $K$ on $G.$ The main result of this work is the following theorem.

\begin{theorem}\label{main:th:2} Let $G$ be a Lie group of polynomial growth. Let $T $ be a convolution operator with a  distribution $K\in \mathscr{D}'(G),$ and let $0\leq \theta<1$. The following statements hold true.
\begin{itemize}
     \item Let $X=\{X_i\}_{i=1}^k$ be a H\"ormander system of vector fields and let $\mathcal{L}=-\sum_{j=1}^{k}X_{j}^2$ be the associated  positive sub-Laplacian. Let $|\cdot|$ be the Carnot-Carath\'eodory distance on $G$ associated to $X,$ and let   $d=d_0$ be the local  dimension of $(G,X).$ Assume that 
    \begin{equation}\label{Fouriergrowth:2}
      (1+{\mathcal{L}})^{\frac{d_{0}\theta}{4}}T:C^\infty_0(G)\subset L^2(G)\rightarrow L^2(G),
    \end{equation} admits a bounded extension. If additionally, $K$ satisfies  the kernel condition 
    \begin{equation}\label{GS:CZ:cond:2'}
        [K]_{H_{\infty,\theta}}:=\sup_{0<R<1}\sup_{|y|<R}  \smallint\limits_{|x|\geq 2R^{1-\theta}}|K(xy^{-1})-K(x)|dx   <\infty,
    \end{equation}  then $T:L^\infty(G)\rightarrow BMO(G)$ admits a bounded extension. On the other hand, if 
    \begin{equation}\label{Fouriergrowth:2New2}
      T(1+{\mathcal{L}})^{\frac{d_{0}\theta}{4}}:C^\infty_0(G)\subset L^2(G)\rightarrow L^2(G),
    \end{equation} is bounded and $K$ satisfies  the kernel condition   
    \begin{equation}\label{GS:CZ:cond:22'}
        [K]_{H_{\infty,\theta}}':=\sup_{0<R<1}\sup_{|y|<R}  \smallint\limits_{|x|\geq 2R^{1-\theta}}|K(y^{-1}x)-K(x)|dx  <\infty,
    \end{equation} then $T:H^1(G)\rightarrow L^1(G)$ extends to a bounded operator.
    \item Consider $G$ to be a  graded Lie group (and hence non-compact), let $|\cdot|$ be a homogeneous quasi-norm on $G$ and let $Q$ be  its homogeneous dimension. Let $\mathcal{R}$ be a Rockland operator of homogeneous degree $\nu>0.$ Assume that  $K$ satisfies the estimate
    \begin{equation}\label{Fourier:growth:1}
        \sup_{\pi\in \widehat{G}}\Vert (1+\pi(\mathcal{R}))^{\frac{Q\theta}{2\nu}}\widehat{K}(\pi) \Vert_{\textnormal{op}}<\infty.
    \end{equation} If additionally, $K$ satisfies the kernel condition 
     \begin{equation}\label{GS:CZ:cond:2}
        [K]_{H_{\infty,\theta}}:=\sup_{0<R<1}\sup_{|y|<R}  \smallint\limits_{|x|\geq 2R^{1-\theta}}|K(xy^{-1})-K(x)|dx   <\infty,
    \end{equation}  then $T:L^\infty(G)\rightarrow BMO(G)$ admits a bounded extension. On the other hand, if one has the estimate
     \begin{equation}\label{Fourier:growth:1New}
        \sup_{\pi\in \widehat{G}}\Vert \widehat{K}(\pi) (1+\pi(\mathcal{R}))^{\frac{Q\theta}{2\nu}} \Vert_{\textnormal{op}}<\infty,
    \end{equation} 
    and the kernel condition   
    \begin{equation}\label{GS:CZ:cond:22}
        [K]_{H_{\infty,\theta}}':=\sup_{0<R<1}\sup_{|y|<R}  \smallint\limits_{|x|\geq 2R^{1-\theta}}|K(y^{-1}x)-K(x)|dx  <\infty,
    \end{equation} then $T:H^1(G)\rightarrow L^1(G)$ extends to a bounded operator.
\end{itemize}
In any case above,  $T$ admits a bounded extension on $L^p(G)$ for all $2\leq p<\infty,$ provided that $K$ satisfies \eqref{Fouriergrowth:2} and \eqref{GS:CZ:cond:2'}, or \eqref{Fourier:growth:1} and   \eqref{GS:CZ:cond:2}, respectively. Moreover,  $T$ admits a bounded extension on $L^p(G)$ for all $1<p\leq 2,$ provided that    $K$ satisfies  \eqref{Fouriergrowth:2New2} and \eqref{GS:CZ:cond:22'}, or \eqref{Fourier:growth:1New} and \eqref{GS:CZ:cond:22}, respectively.
\end{theorem}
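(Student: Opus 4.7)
The plan is to follow the Fefferman--Stein endpoint-interpolation scheme, tailored to the metric and spectral structure of $G$. First, I would observe that either pair of hypotheses already yields the ordinary $L^2$-boundedness of $T$, since $(1+\mathcal{L})^{-d_0\theta/4}$ (respectively the Fourier multiplier $(1+\pi(\mathcal{R}))^{-Q\theta/(2\nu)}$) is uniformly bounded on $L^2$ by the spectral/Plancherel calculus, so $T$ factors through the hypothesized bounded composition. More substantially, combining the hypothesis with the Sobolev embedding $(1+\mathcal{L})^{-d_0\theta/4}:L^2\to L^q(G)$ for $1/q=(1-\theta)/2$ (Varopoulos--Saloff-Coste--Coulhon in the sub-Riemannian case, and its Rockland analogue on graded groups, cf.\ \cite{FischerRuzhanskyBook}) upgrades the first hypothesis to the smoothing estimate $T:L^2\to L^q$; dually, the second hypothesis gives $T:L^{q'}\to L^2$ with $1/q'=(1+\theta)/2$. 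This Sobolev gain is precisely what replaces the classical Fourier-decay \eqref{decay} in the arguments below.

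For the $L^\infty\to BMO$ assertion, I would fix $B=B(x_0,R)$ and split $f=f_1+f_2$ at the \emph{Fefferman scale} $B^{*}:=B(x_0,2R^{1-\theta})$, with $f_1=f\chi_{B^{*}}$ and constant $c_B:=Tf_2(x_0)$. For the near part $Tf_1$, H\"older and the smoothing estimate give
\[
\frac{1}{|B|}\int_{B}|Tf_1|\,dx
\;\lesssim\; |B|^{-1/q}\|Tf_1\|_{L^q}
\;\lesssim\; |B|^{-1/q}\|f_1\|_{L^2}
\;\lesssim\; \bigl(|B^{*}|/|B|\bigr)^{1/q}\|f\|_{\infty},
\]
and the volume relations $|B|\sim R^{d_0}$, $|B^{*}|\sim R^{(1-\theta)d_0}$ together with $1/q=(1-\theta)/2$ make the $R$-powers cancel exactly in the range $R<1$. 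For the far part, writing $Tf_2(x)-Tf_2(x_0)$ as an integral of kernel differences and changing variables so that the inner increment has size $<R$, I bound the result by $[K]_{H_{\infty,\theta}}\|f\|_{\infty}$. Balls with $R\geq 1$ reduce to the usual $L^2$-based Calder\'on--Zygmund argument via the global volume growth $|B|\sim R^{d_\infty}$.

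The $H^1\to L^1$ estimate is dual in spirit. I would use the atomic decomposition and reduce to $\|Ta\|_{L^1}\lesssim 1$ uniformly over $H^1$-atoms $a$ with $\mathrm{supp}\,a\subset B=B(x_0,R)$, $\|a\|_{\infty}\leq|B|^{-1}$ and $\int a=0$. Splitting $G=B^{*}\cup(B^{*})^{c}$: on $B^{*}$ one applies Cauchy--Schwarz with the dual smoothing $T:L^{q'}\to L^2$ and the atomic bound $\|a\|_{L^{q'}}\leq|B|^{-1/q}$, which by the same exponent arithmetic produces a bounded near-field contribution; on $(B^{*})^{c}$, the cancellation $\int a=0$ lets me replace $K$ by $K(xy^{-1})-K(xx_0^{-1})$ (or its left analogue, depending on the convolution convention), and $[K]'_{H_{\infty,\theta}}$ handles the remaining integral. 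The $L^p$ bounds then follow by interpolation: $2\leq p<\infty$ by combining $L^2\to L^2$ with $L^\infty\to BMO$ via the Fefferman--Stein sharp-function argument, and $1<p\leq 2$ by combining $L^2\to L^2$ with $H^1\to L^1$.

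I expect the main obstacle to be the dimension bookkeeping across the two metric regimes of $(G,X)$: the local dimension $d_0$ governs every small-scale estimate (and therefore the Fefferman scaling $R^{1-\theta}$), whereas the global dimension $d_\infty$ controls large balls and must be accommodated without assuming any relation between the two. A secondary technical point, specific to the graded case, is verifying that the Fourier-side growth hypothesis $\sup_\pi\|(1+\pi(\mathcal{R}))^{Q\theta/(2\nu)}\widehat{K}(\pi)\|_{\mathrm{op}}<\infty$ really yields the same Sobolev gain as $(1+\mathcal{L})^{d_0\theta/4}T:L^2\to L^2$ does on general groups of polynomial growth; by Plancherel this reduces to an operator-norm identity, so once the Sobolev frameworks of \cite{Saloff-CosteBook} and \cite{FischerRuzhanskyBook} are aligned, the two formulations translate into one another.
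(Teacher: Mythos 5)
Your proposal is correct and, for the $L^\infty\rightarrow BMO$ half, is essentially the paper's own argument: the same splitting of $f$ at the Fefferman scale $2R^{1-\theta}$, the same use of the subelliptic (resp.\ Rockland) Sobolev embedding with $1/q=(1-\theta)/2$ to convert the spectral hypothesis into an $L^2\rightarrow L^q$ gain for $T$ (resp.\ for $T^*$), the same choice of constant $c_B=Tf_2(x_0)$, and the same reduction of the far field to $[K]_{H_{\infty,\theta}}$ after the change of variables that exploits the symmetry $|x|=|x^{-1}|$ of the Carnot--Carath\'eodory distance or of the homogeneous quasi-norm. The only genuine divergence is in the $H^1\rightarrow L^1$ half: you run the atomic decomposition directly on $T$, estimating $\Vert Ta\Vert_{L^1}$ near and far from the supporting ball, whereas the paper never touches atoms --- it proves $T^*:L^\infty\rightarrow BMO$ (using the identity $K^{*}(x)=\overline{K}(x^{-1})$ to show that the left-type condition $[K]_{H_{\infty,\theta}}'$ on $K$ is exactly the right-type condition on the kernel of $T^*$) and then concludes by $H^1$--$BMO$ duality. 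The two routes are dual to one another and rest on the same two estimates; the duality route has the small advantage of sidestepping the known caveat that uniform $L^1$ bounds on atoms do not by themselves guarantee a bounded extension to all of $H^1$ (harmless here because of the a priori $L^2$ bound, but a point you would need to say a word about), and it also makes transparent why the left-increment condition $[K]_{H_{\infty,\theta}}'$ and the hypothesis on $T(1+\mathcal{L})^{\frac{d_0\theta}{4}}$ --- rather than their right-handed counterparts --- are the correct pairing for the $H^1\rightarrow L^1$ statement.
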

Now, we briefly present some remarks related with  Theorem \ref{main:th:2}.
\begin{remark}We note that the statement in Theorem  \ref{main:th:2} for $\theta=0$ follows from the general theory of singular integrals developed by Coifman and Weiss \cite{CoifmanWeiss} for spaces of homogeneous type. So, the contribution in Theorem  \ref{main:th:2} corresponds to the oscillating setting, namely, when $0<\theta<1.$
\end{remark}

\begin{remark}
In the context of $G=\mathbb{R}^n,$ and of the positive Laplacian $\mathcal{R}=\mathcal{L}=-\Delta_x,$ the $H^1(\mathbb{R}^n)-L^1(\mathbb{R}^n)$-boundedness result was proved in the non-oscillating case $\theta=0$  by Suzuki in \cite{Suzuki2021}. As mentioned above, in the Euclidean case that the conditions $[K]_{\infty,\theta}<\infty$ and \eqref{decay} are enough to deduce the $H^1(\mathbb{R}^n)-L^1(\mathbb{R}^n)$ boundedness of the corresponding convolution operator $Tf=f\ast K$ were proved by Fefferman and Stein in \cite{FeffermanStein1972} when $0\leq \theta <1.$ 

\end{remark}
\begin{remark} 
In the non-commutative case, the $L^1$-average conditions in Theorem \ref{main:th:2} are already new, even considering Lie groups of polynomial growth or the graded case. Observe that in the setting of Lie groups of polynomial growth $G$ both of the situations $d_0\leq d_\infty $ and $d_\infty\leq d_0$ are equally probable. For instance, if  $G$ is a simply connected nilpotent Lie group, then $d_0\leq d_\infty,$ and when $G$ is compact $d_\infty=0.$ 

An interesting fact is that starting with a Lie group $G$ where $d_0\leq d_\infty$ we can always consider the product space $G'=\mathbb{T}^{d_\infty-d_0+1}\times G,$ which has local dimension $d_0'=(d_\infty-d_0+1)+d_0=d_\infty+1,$ global dimension $d_\infty'=0+d_\infty=d_\infty$ and of course $d_\infty'<d_0'.$ Let us finalise this remark by observing that in the case of a  stratified nilpotent  Lie group $G,$  $d_0=d_\infty,$ see Coulhon,  Saloff-Coste, and Varopoulos \cite[Chapter VI]{Saloff-CosteBook}. {{ Also, if $G$ is a graded Lie group, then $d_0=d_\infty=Q,$ the homogeneous dimension of $G$. }}
\end{remark}
\begin{remark}
In our analysis the appearance of the local dimension $d_0$ is justified by the estimates of  the heat kernel $e^{-t\mathcal{L}}$ of the sub-Laplacian for small time $t\rightarrow 0^+$, see Coulhon,  Saloff-Coste, and Varopoulos \cite[Chapter VIII]{Saloff-CosteBook}. 
\end{remark}
\begin{remark}
For $\theta=0$ the conditions in \eqref{Fouriergrowth:2} and \eqref{Fourier:growth:1} are reduced to  the $L^2$-boundedness of the operator $T,$ in view of the group Plancherel theorem. For singular integrals on $\mathbb{R}^n$ the $L^2$-boundedness of singular integrals (of possibly non-convolution type) was characterised by David and Journ\'e in their celebrated work \cite{DavidJourne}.
\end{remark}
\begin{remark}
Fourier multipliers satisfying Marcinkiewicz, and H\"ormander-Mihlin criteria  in the context of Lie groups and several spaces of homogeneous type have Calder\'on-Zygmund type kernels. In the context of spectral multipliers of  self-adjoint operators, e.g., sub-Laplacians,  or of other operators with heat kernels  satisfying  Gaussian estimates, with general contexts that go beyond of the objective of this paper we refer the reader e.g. to   \cite{alexo,Anker,Sikora,CowlingSikora,CW3},  the book of Stein \cite{Stein} and to the extensive list of references therein.
\end{remark}

This paper is organised as follows. The paper ends with Section \ref{Examples} where we present some examples illustrating our main Theorem \ref{main:th:2}, however the notations and the notions (of representation theory and of Lie theory) that we use  are contained in Section \ref{Preliminaries}.  
In Section \ref{ProofMainTh} we present the proof of Theorem \ref{main:th:2}.

\section{Preliminaries}\label{Preliminaries}

\subsection{Sub-Laplacians  on  Lie groups of polynomial growth}
Let $G$ be a Lie group of  polynomial growth  with Lie algebra $\mathfrak{g}.$ Under the identification $\mathfrak{g}\simeq T_{e_G}G,  $ where $e_{G}$ is the identity element of $G,$ let us consider  a system of $C^\infty$-vector fields $X=\{X_1,\cdots,X_k \}\in \mathfrak{g}$. For all $I=(i_1,\cdots,i_\omega)\in \{1,2,\cdots,k\}^{\omega},$ of length $\omega\geqslant   2,$ denote $$X_{I}:=[X_{i_1},[X_{i_2},\cdots [X_{i_{\omega-1}},X_{i_\omega}]\cdots]],$$ and for $\omega=1,$ $I=(i),$ $X_{I}:=X_{i}.$ Let $V_{\omega}$ be the subspace generated by the set $\{X_{I}:|I|\leqslant \omega\}.$ That $X$ satisfies the H\"ormander condition,  means that there exists $\kappa'\in \mathbb{N}$ such that $V_{\kappa'}=\mathfrak{g}.$ Certainly, we consider the smallest $\kappa'$ with this property and we denote it by $\kappa$ which will be  later called the step of the system $X.$ We also say that $X$ satisfies the H\"ormander condition of order $\kappa.$ Note that  the sum of squares
\begin{equation*}
    \mathcal{L}=-\sum_{i=1}^kX_i^2,
\end{equation*} is a subelliptic operator which by following the usual nomenclature is called the subelliptic Laplacian associated with the family $X.$ For short we refer to $\mathcal{L}$ as the sub-Laplacian. In view of the H\"ormander theorem on sums of the  squares of vector fields (see H\"ormander \cite{Hormander1967}) it is a hypoelliptic operator (i.e. if $ \mathcal{L}u\in C^\infty(G)$ with $u\in  \mathscr{D}'(G)$ then $u\in C^\infty(G),$ and also locally at all points). 

A central notion in the analysis of the sub-Laplacian is that of the Hausdorff dimension, in this case, associated to $\mathcal{L}$. Indeed, for all $x\in G,$ denote by $H_{x}^\omega G$ the subspace of the tangent space $T_xG$ generated by the $X_i$'s and all the Lie brackets  $$ [X_{j_1},X_{j_2}],[X_{j_1},[X_{j_2},X_{j_3}]],\cdots, [X_{j_1},[X_{j_2}, [X_{j_3},\cdots, X_{j_\omega}] ] ],$$ with $\omega\leqslant \kappa.$ The H\"ormander condition can be stated as $H_{x}^\kappa G=T_xG,$ $x\in G.$ We have the filtration
\begin{equation*}
H_{x}^1G\subset H_{x}^2G \subset H_{x}^3G\subset \cdots \subset H_{x}^{\kappa-1}G\subset H_{x}^\kappa G= T_xG,\,\,x\in G.
\end{equation*} In our case,  the dimension of every $H_x^\omega G$ does not depend on $x$ and we write $\dim H^\omega G:=\dim H_{x}^\omega G,$ for any $x\in G.$ So, the Hausdorff dimension can be defined as (see e.g. \cite[p. 6]{HK16}),
\begin{equation}\label{Hausdorff-dimension}
    Q:=\dim(H^1G)+\sum_{i=1}^{\kappa-1} (i+1)(\dim H^{i+1}G-\dim H^{i}G ).
\end{equation}

\subsection{Lie groups of Type I  and group Plancherel formula} Let $G$ be a second countable  Lie group $G$ of type I and let $dx$ be  its left-invariant Haar measure.
Let us record the notion of the unitary dual $\widehat{G}$ of $G$.  Let us denote by  $\xi$  a strongly continuous, unitary and irreducible  representation of $G,$ this means that,
\begin{itemize}
    \item $\xi\in \textnormal{Hom}(G, \textnormal{U}(H_{\xi})),$ for some infinite-dimensional Hilbert space $H_\xi,$ i.e. $\xi(xy)=\xi(x)\xi(y)$ and for the  adjoint of $\xi(x),$ $\xi(x)^*=\xi(x^{-1}),$ for every $x,y\in G.$
    \item The map $(x,v)\mapsto \xi(x)v, $ from $G\times H_\xi$ into $H_\xi$ is continuous.
    \item For every $x\in G,$ and $W_\xi\subset H_\xi,$ if $\xi(x)W_{\xi}\subset W_{\xi},$ then $W_\xi=H_\xi$ or $W_\xi=\emptyset.$
\end{itemize} Let $\textnormal{Rep}(G)$ be the set of unitary, strongly continuous and irreducible representations of $G.$ The relation, {\small{
\begin{equation*}
    \xi_1\sim \xi_2\textnormal{ if and only if, there exists } A\in \textnormal{End}(H_{\xi_1},H_{\xi_2}),\textnormal{ such that }A\xi_{1}(x)A^{-1}=\xi_2(x), 
\end{equation*}}}for every $x\in G,$ is an equivalence relation and the unitary dual of $G,$ denoted by $\widehat{G}$ is defined via
$$
    \widehat{G}:={\textnormal{Rep}(G)}/{\sim}.
$$

If additionally, $G$ is a Lie group of polynomial growth, for every representation $\xi\in\widehat{G},$ $\xi:G\rightarrow \mathscr{B}({H}_{\xi}),$ we denote by ${H}_{\xi}^{\infty}$ the set of smooth vectors, that is, the space of elements $v\in {H}_{\xi}$ such that the function $x\mapsto \xi(x)v,$ $x\in {G},$ is smooth.

The Fourier transform $\widehat{f}$ of a distribution $f\in \mathscr{D}'(G)$ is defined via,
\begin{equation*}
    \widehat{f}(\xi)\equiv (\mathscr{F}f)(\xi):=\smallint\limits_{G}f(x)\xi(x)^*dx.
\end{equation*} If $f\in L^1(G),$ then for a.e. $\xi,$ $\widehat{f}(\xi)\in \mathscr{B}(H_\xi).$ Because $G$ is of type I, the unitary dual $\widehat{G}$ admits a Borel structure $(\widehat{G},d\nu(\xi))$ called in this setting the Mackey-Borel structure. Indeed, by denoting $\mathscr{B}^2(\mathcal{H}_\xi)$ the class of Hilbert-Schmidt operators on $\mathcal{H}_\xi,$ one has the direct Hilbert space integral
\begin{equation}
    \mathscr{B}^2(\widehat{G}):=\smallint\limits_{\widehat{G}}^{\oplus}\mathscr{B}^2(H_\xi)d\nu(\xi)\cong \smallint\limits_{\widehat{G}}^{\oplus}H_\xi\otimes \overline{H}_\xi d\nu(\xi). 
\end{equation}
In particular, the Fourier transform $\mathscr{F}:L^1(G)\rightarrow \mathscr{B}(\widehat{G}):= \smallint\limits_{\widehat{G}}^{\oplus}\mathscr{B}(H_\xi)d\nu(\xi)$ is injective and one can reconstruct a function using  the Fourier inversion formula
\begin{equation*}
    f(x)=\smallint_{[\xi]\in \widehat{G}}\textnormal{\textbf{Tr}}[\xi(x)\widehat{f}(\xi)]d\nu(\xi),\,\,f\in L^1(G)\cap L^2(G).
\end{equation*} The Plancherel theorem states that  $\mathscr{F}:L^2(G)\rightarrow \mathscr{B}^2(\widehat{G})$ is an isometry, where the inner product on $\mathscr{B}^2(\widehat{G})$ is defined via
\begin{equation}
    (f,g)_{\mathscr{B}^2(\widehat{G})}:=\smallint_{[\xi]\in \widehat{G}}\textnormal{\textbf{Tr}}[\widehat{f}(\xi)\widehat{g}(\xi)^*]d\nu(\xi).
\end{equation}So, the Plancherel theorem takes the form
\begin{equation}
    \Vert f\Vert_{L^2(G)}^2=\smallint_{[\xi]\in \widehat{G}}\|\widehat{f}(\xi)\|_{\textnormal{HS}}^2d\nu(\xi).
\end{equation}We have denoted by $\|\cdot \|_{\textnormal{HS}}$ the usual Hilbert-Schmidt norm on $\xi$-a.e.  representation space $\mathscr{B}^2(H_\xi).$

\subsection{The spaces $H^1$ and $BMO$ on Lie groups of polynomial growth}\label{BMOH1}
 Let $G$ be a Lie group of polynomial growth. In that follows we record the definitions of the Hardy space and of the $BMO$ space on a Lie group of polynomial growth ({{a Lie group of this type is unimodular,}} see \cite[Page 3]{terElstRobinsonZhu}). For this, we follow ter Elst, Robinson and Zhu \cite{terElstRobinsonZhu} and Coifman and Weiss \cite{CoifmanWeiss}. 
 
 Let us consider a sub-Laplacian $\mathcal{L}=-(X_1^2+\cdots +X_k^2)$ on $G,$ where the system of vector fields $X=\{X_i\}_{i=1}^{k}$ satisfies the H\"ormander condition of step $\kappa$. For every point $g\in G,$ let us denote  $X_{g}=\{X_{i,g}\}_{i=1}^{k},$  $\mathcal{H}_{g}=\textnormal{span}\{X_{g}\}.$ We say that a curve $\gamma:[0,1]\rightarrow G$ is  horizontal if  $$\Dot{\gamma}(t)\in \mathcal{H}_{\gamma(t)},\textnormal{   for a.e.  }t\in (0,1).$$ The Carnot-Carath\'eodory distance associated to the sub-Riemannian structure induced by $X,$ is defined by
 \begin{equation*}
     d_{s}(g_{0},g_{1}):=\inf_{\gamma \textnormal{ horizontal   }}\{l(\gamma):=\smallint\limits_{0}^{1}|\Dot{\gamma}(t)|dt:\,\,\gamma(0)=g_0,\,\gamma(1)=g_1\},\,\,g_{0},g_{1 }\in G.
 \end{equation*}
 
 We will fix a subelliptic distance on $G,$ $|\cdot|,$ defined by the Carnot-Carath\'eodory distance in the natural way: $|g|=d_s(g, e_G),$ 
where $e_{G}$ is the identity element of $G.$  As usual, the ball of radius $r>0,$ is defined as 
\begin{equation*}
    B(x,r)=\{y\in G:|y^{-1}x|<r\}.
\end{equation*}

An important geometric property of the family of balls associated to the Carnot-Carath\'eodory distance arises when one wants to compute their volume. Indeed,  there exist $d=d_0$ and $d'=d_\infty$ such that 
\begin{equation}
    \forall R\in [0,1],\, |B(x,R)|\sim R^{d_0},\,\forall R\geq 1,\, |B(x,R)|\sim R^{d_\infty},
\end{equation} where $B(x,R)$ is the ball of radius $R>0$ associated to the distance $|\cdot|.$   Then $d_0$ and $d_\infty$ are called  the local dimension and the global dimension of the pair $(G,X),$ respectively. One of the main aspects of our work, is that our kernel criteria are presented in terms of the local dimension $d_0.$ It is well known that $d_0$ depends of the H\"ormander system of vector fields $X$ and that $d_\infty$ only depends of the group $G$.

The subelliptic $BMO$ space on $G,$ $\textnormal{BMO}^{\mathcal{L}}(G),$ is the space of locally integrable functions $f$ satisfying
\begin{equation*}
    \Vert f\Vert_{\textnormal{BMO}^{\mathcal{L}}(G)}:=\sup_{\mathbb{B}}\frac{1}{|\mathbb{B}|}\smallint\limits_{\mathbb{B}}|f(x)-f_{\mathbb{B}}|dx<\infty,\textnormal{ where  } f_{\mathbb{B}}:=\frac{1}{|\mathbb{B}|}\smallint\limits_{\mathbb{B}}f(x)dx,
\end{equation*}
and $\mathbb{B}$ ranges over all balls $B(x_{0},r),$ with $(x_0,r)\in G\times (0,\infty).$ The subelliptic  Hardy space $\textnormal{H}^{1,\mathcal{L}}(G)$ will be defined via the atomic decomposition:  $f\in \textnormal{H}^{1,\mathcal{L}}(G)$ if and only if $f$ can be expressed as $$f=\sum_{j=1}^\infty c_{j}a_{j},$$ where $\{c_j\}_{j=1}^\infty$ is a sequence in $\ell^1(\mathbb{N}),$ and every function $a_j$ is an atom, i.e., $a_j$ is supported in some ball $B_j,$ ($a_j$ satisfies the cancellation property) $$\smallint\limits_{B_j}a_{j}(x)dx=0,$$ and 
\begin{equation*}
    \Vert a_j\Vert_{L^\infty(G)}\leqslant \frac{1}{|B_j|}.
\end{equation*} The norm $\Vert f\Vert_{\textnormal{H}^{1,\mathcal{L}}(G)}$ is the infimum over  all possible series $\sum_{j=1}^\infty|c_j|.$ Furthermore $\textnormal{BMO}^{\mathcal{L}}(G)$ is the topological dual of the Hardy space $\textnormal{H}^{1,\mathcal{L}}(G),$ see  Coifman and Weiss \cite{CoifmanWeiss} or ter Elst, Robinson and Zhu \cite[Page 4]{terElstRobinsonZhu}. This can be understood in the following sense:
\begin{itemize}
    \item[(a).] If $\phi\in \textnormal{BMO}^{\mathcal{L}}(G), $ then $$\Phi: f\mapsto \smallint\limits_{G}f(x)\phi(x)dx,$$ admits a bounded extension on $\textnormal{H}^{1,\mathcal{L}}(G).$
    \item[(b).] Conversely, every continuous linear functional $\Phi$ on $\textnormal{H}^{1,\mathcal{L}}(G)$ arises as in $\textnormal{(a)}$ with a unique element $\phi\in \textnormal{BMO}^{\mathcal{L}}(G).$
\end{itemize} The norm of $\phi$ as a linear functional on $\textnormal{H}^{1,\mathcal{L}}(G)$ is equivalent with the $\textnormal{BMO}^{\mathcal{L}}(G)$-norm. Important properties of the $\textnormal{BMO}^{\mathcal{L}}(G)$ and the $\textnormal{H}^{1,\mathcal{L}}(G)$ norms are the following,
\begin{equation}
 \Vert f \Vert_{\textnormal{BMO}^{\mathcal{L}}(G)}  =\sup_{\Vert g\Vert_{\textnormal{H}^{1,\mathcal{L}}(G)}=1} 
\left| \smallint\limits_{G}f(x)g(x)dx\right|,\end{equation}
\begin{equation}\Vert g \Vert_{\textnormal{H}^{1,\mathcal{L}}(G)}  =\sup_{\Vert f\Vert_{\textnormal{BMO}^{\mathcal{L}}(G)}=1} 
\left| \smallint\limits_{G}f(x)g(x)dx\right|.
\end{equation} 
If we replace $\mathcal{L}$ by the Laplacian $\mathcal{L}_G$ in the definitions above, we will write  $BMO(G)$ and $H^1(G),$ defined by the distance induced by the usual bi-invariant Riemannian metric on $G.$ The subelliptic Fefferman-Stein interpolation theorem in this case can be stated as follows (see Carbonaro, Mauceri and Meda \cite{CMM}). 
\begin{theorem}
Let $G$ be a  Lie group of polynomial growth. Let us consider a sub-Laplacian $\mathcal{L}=-(X_1^2+\cdots +X_k^2)$ on $G,$ where the system of vector fields $X=\{X_i\}_{i=1}^{k}$ satisfies the H\"ormander condition of step $\kappa$. For every $\theta\in (0,1),$ we have,
\begin{itemize}
    \item If $p_\theta=\frac{2}{1-\theta},$ then $(L^2,\textnormal{BMO}^{\mathcal{L}})_{[\theta]}(G)=L^{p_\theta}(G).$ 
    \item If $p_\theta=\frac{2}{2-\theta},$ then $(\textnormal{H}^{1,\mathcal{L}},L^2)_{[\theta]}(G)=L^{p_\theta}(G).$ 
\end{itemize}
\end{theorem}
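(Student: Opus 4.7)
The plan is to follow the Fefferman-Stein complex interpolation scheme in the space of homogeneous type $(G,|\cdot|,dx)$, where $|\cdot|$ is the Carnot-Carath\'eodory distance induced by the H\"ormander system $X$. The two-regime volume growth $|B(x,r)|\sim r^{d_0}$ for $r\leq 1$ and $|B(x,r)|\sim r^{d_\infty}$ for $r\geq 1$ combines into a single global doubling estimate $|B(x,2r)|\lesssim |B(x,r)|$, so the Coifman-Weiss atomic/dual framework for $H^{1,\mathcal{L}}$ and $BMO^{\mathcal{L}}$ recalled in Subsection \ref{BMOH1} is available. By the standard duality of the complex interpolation method, the identity $(H^{1,\mathcal{L}})^{*}=BMO^{\mathcal{L}}$, the reflexivity of $L^{2}$, and the symmetry $(X_0,X_1)_{[\theta]}=(X_1,X_0)_{[1-\theta]}$, the second identity is the dual of the first under the substitution $\theta\leftrightarrow 1-\theta$ (note that $(2/\theta)'=2/(2-\theta)$, matching the stated second exponent). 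Hence it suffices to establish $(L^{2},BMO^{\mathcal{L}})_{[\theta]}=L^{p_\theta}$ with $p_\theta=2/(1-\theta)$.

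The central analytic ingredient is the sharp maximal function
\begin{equation*}
f^{\#}(x):=\sup_{B\ni x}\frac{1}{|B|}\smallint_{B}|f(y)-f_{B}|\,dy,
\end{equation*}
the supremum being taken over all balls $B=B(z,r)$ containing $x$, together with the Fefferman-Stein bound $\|f\|_{L^{p}(G)}\lesssim \|f^{\#}\|_{L^{p}(G)}$ valid for $1<p<\infty$. I would prove this inequality on $(G,|\cdot|,dx)$ by reproducing the Euclidean argument: a Calder\'on-Zygmund stopping-time decomposition yields a good-$\lambda$ inequality linking the Hardy-Littlewood maximal function $\mathcal{M}$ to $f^{\#}$, and combining it with the weak-$(1,1)$ bound for $\mathcal{M}$ gives the $L^{p}$ estimate. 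All constants depend only on the doubling constant of $(G,|\cdot|,dx)$, so the transition between the two volume-growth regimes is absorbed transparently.

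For the embedding $(L^{2},BMO^{\mathcal{L}})_{[\theta]}\hookrightarrow L^{p_\theta}$, given $f$ in the interpolation space I would pick an admissible analytic $F\colon\overline{S}\to L^{2}+BMO^{\mathcal{L}}$ with $F(\theta)=f$ and carry out a three-lines argument applied to $z\mapsto F(z)^{\#}$, using $\|F(iy)^{\#}\|_{L^{2}}\lesssim \|F(iy)\|_{L^{2}}$ and $\|F(1+iy)^{\#}\|_{L^{\infty}}\leq 2\|F(1+iy)\|_{BMO^{\mathcal{L}}}$; the subharmonicity required for the three-lines lemma is obtained by a standard linearization of the sublinear map $f\mapsto f^{\#}$, choosing near-optimal balls and taking a countable sup. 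The resulting $L^{p_\theta}$ bound for $f^{\#}$ then transfers to $f$ via the Fefferman-Stein inequality. For the reverse embedding, given $f\in L^{p_\theta}$ with $\|f\|_{L^{p_\theta}}=1$, the Calder\'on-type analytic family
\begin{equation*}
F(z):=\operatorname{sgn}(f)\,|f|^{p_\theta(1-z)/2}
\end{equation*}
satisfies $F(\theta)=f$, $\|F(iy)\|_{L^{2}}^{2}=\|f\|_{L^{p_\theta}}^{p_\theta}$ uniformly in $y$, and $|F(1+iy)|\leq \chi_{\{f\neq 0\}}$, so $F(1+iy)\in L^{\infty}\subset BMO^{\mathcal{L}}$ uniformly; since $F$ is bounded on $\overline{S}$ as a map into $L^{2}+L^{\infty}$ and analytic in the interior, it is an admissible family, yielding $\|f\|_{(L^{2},BMO^{\mathcal{L}})_{[\theta]}}\lesssim \|f\|_{L^{p_\theta}}$.

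The main obstacle is the Fefferman-Stein inequality itself: although its Euclidean proof is classical, one must confirm that the Calder\'on-Zygmund decomposition, the good-$\lambda$ argument, and the Hardy-Littlewood-Wiener maximal theorem go through uniformly on $(G,|\cdot|,dx)$ across both volume-growth regimes. This reduces to the single global doubling constant. A secondary subtlety is the justification of the duality step and of the admissibility of the naive Calder\'on family on the non-reflexive couple $(L^{2},BMO^{\mathcal{L}})$; the density of $L^{2}\cap L^{\infty}$ in the interpolation space and the compatibility of the complex method with this couple are the core technical verifications that Carbonaro, Mauceri and Meda \cite{CMM} carry out explicitly.
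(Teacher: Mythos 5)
The paper does not prove this theorem at all: it is quoted as a known result from Carbonaro, Mauceri and Meda \cite{CMM} and used as a black box to pass from the $L^\infty\to BMO$ and $H^1\to L^1$ bounds to the $L^p$ conclusions of Theorem \ref{main:th:2}. So there is no internal proof to compare against; what you have written is a reconstruction of the classical Fefferman--Stein complex-interpolation scheme (linearized sharp maximal function and good-$\lambda$ inequality for the containment $(L^2,\textnormal{BMO}^{\mathcal{L}})_{[\theta]}\hookrightarrow L^{p_\theta}$, the Calder\'on power family for the reverse containment, duality for the $H^1$ endpoint), which is indeed the route the cited reference adapts to the metric-measure setting. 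Your observation that the two volume regimes $r^{d_0}$ and $r^{d_\infty}$ merge into a single global doubling constant, so that the Coifman--Weiss and Hardy--Littlewood machinery applies uniformly, is correct and is the only genuinely group-theoretic input.

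Two steps in your outline carry essentially all of the difficulty and are deferred rather than resolved. First, the inequality $\Vert f\Vert_{L^{p}}\lesssim\Vert f^{\#}\Vert_{L^{p}}$ is false without an a priori hypothesis on $f$ when $G$ has infinite Haar measure (a nonzero constant has vanishing sharp function), and an element of $(L^2,\textnormal{BMO}^{\mathcal{L}})_{[\theta]}$ is a priori only in $L^2+\textnormal{BMO}^{\mathcal{L}}$, hence in no $L^{p_0}$; one must either work with $\textnormal{BMO}^{\mathcal{L}}$ modulo constants and correct the representative produced by the three-lines argument, or run the good-$\lambda$ inequality with a truncated maximal function and pass to the limit. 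Second, the reduction of the $(\textnormal{H}^{1,\mathcal{L}},L^2)$ identity to the $(L^2,\textnormal{BMO}^{\mathcal{L}})$ identity by ``duality of the complex method'' invokes Calder\'on's duality theorem, which requires density of the intersection in the predual couple and the identification of the upper and lower complex methods; since $\textnormal{BMO}^{\mathcal{L}}$ is a non-separable dual space this is not automatic, and it is precisely one of the points that \cite{CMM} (and, in the Euclidean case, Fefferman and Stein) must address --- the alternative being a direct proof of the $H^1$ endpoint, which is harder because the naive power family does not preserve the atomic cancellation. Neither issue is fatal, and your exponent bookkeeping ($(2/\theta)'=2/(2-\theta)$, $\theta\leftrightarrow 1-\theta$) is consistent, but as written the proposal outsources exactly the parts of the argument where the work lies.
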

From now, for the simplicity of the notation we will write $BMO(G)$ and $H^1(G)$ for the corresponding $BMO^{\mathcal{L}}(H)$ and $H^{1,\mathcal{L}}(G)$ associated to a sub-Laplacian $\mathcal{L}$, respectively, even when these spaces can depend of the H\"ormander system of vector-fields.

  \subsection{Homogeneous and graded Lie groups} 
    Let $G$ be a homogeneous Lie group. This means that $G$ is a connected and simply connected Lie group whose Lie algebra $\mathfrak{g}$ is endowed with a family of dilations $D_{r}^{\mathfrak{g}},$ $r>0,$ which are automorphisms on $\mathfrak{g}$  satisfying the following two conditions:
\begin{itemize}
\item For every $r>0,$ $D_{r}^{\mathfrak{g}}$ is a map of the form
$$ D_{r}^{\mathfrak{g}}=\textnormal{Exp}(\ln(r)A) $$
for some diagonalisable linear operator $A\equiv \textnormal{diag}[\nu_1,\cdots,\nu_n]$ on $\mathfrak{g}.$
\item $\forall X,Y\in \mathfrak{g}, $ and $r>0,$ $[D_{r}^{\mathfrak{g}}X, D_{r}^{\mathfrak{g}}Y]=D_{r}^{\mathfrak{g}}[X,Y].$ 
\end{itemize}
We call  the eigenvalues of $A,$ $\nu_1,\nu_2,\cdots,\nu_n,$ the dilations weights or weights of $G$.  The homogeneous dimension of a homogeneous Lie group $G$ is given by  $$ Q=\textnormal{\textbf{Tr}}(A)=\nu_1+\cdots+\nu_n.  $$
The dilations $D_{r}^{\mathfrak{g}}$ of the Lie algebra $\mathfrak{g}$ induce a family of  maps on $G$ defined via,
$$ D_{r}:=\exp_{G}\circ D_{r}^{\mathfrak{g}} \circ \exp_{G}^{-1},\,\, r>0, $$
where $\exp_{G}:\mathfrak{g}\rightarrow G$ is the usual exponential mapping associated to the Lie group $G.$ We refer to the family $D_{r},$ $r>0,$ as dilations on the group. If we write $rx=D_{r}(x),$ $x\in G,$ $r>0,$ then a relation on the homogeneous structure of $G$ and the Haar measure $dx$ on $G$ is given by $$ \smallint\limits_{G}(f\circ D_{r})(x)dx=r^{-Q}\smallint\limits_{G}f(x)dx. $$
    
A  Lie group is graded if its Lie algebra $\mathfrak{g}$ may be decomposed as the sum of subspaces $\mathfrak{g}=\mathfrak{g}_{1}\oplus\mathfrak{g}_{2}\oplus \cdots \oplus \mathfrak{g}_{s}$ such that $[\mathfrak{g}_{i},\mathfrak{g}_{j} ]\subset \mathfrak{g}_{i+j},$ and $ \mathfrak{g}_{i+j}=\{0\}$ if $i+j>s.$  Examples of such groups are the Heisenberg group $\mathbb{H}^n$ and more generally any stratified groups where the Lie algebra $ \mathfrak{g}$ is generated by $\mathfrak{g}_{1}$.  Here, $n$ is the topological dimension of $G,$ $n=n_{1}+\cdots +n_{s},$ where $n_{k}=\mbox{dim}\mathfrak{g}_{k}.$

A Lie algebra admitting a family of dilations is nilpotent, and hence so is its associated
connected, simply connected Lie group.  

A graded Lie group $G$ is a homogeneous Lie group equipped with a family of weights $\nu_j,$ all of them positive rational numbers. Let us observe that if $\nu_{i}=\frac{a_i}{b_i}$ with $a_i,b_i$ integer numbers,  and $b$ is the least common multiple of the $b_i's,$ the family of dilations 
$$ \mathbb{D}_{r}^{\mathfrak{g}}=\textnormal{Exp}(\ln(r^b)A):\mathfrak{g}\rightarrow\mathfrak{g}, $$
have integer weights,  $\nu_{i}=\frac{a_i b}{b_i}. $ So, in this paper we always assume that the weights $\nu_j,$ defining the family of dilations are non-negative integer numbers which allow us to assume that the homogeneous dimension $Q$ is a non-negative integer number. This is a natural context for the study of Rockland operators (see Remark 4.1.4 of \cite{FischerRuzhanskyBook}).

\subsection{The spaces $H^1$ and $BMO$ on homogeneous Lie groups}
We will fix a homogeneous quasi-norm on $G,$ $|\cdot|.$ This means that $|\cdot|$ is a non-negative function on $G,$ satisfying 
\begin{equation}\label{symmetric:prop}
    |x|=|x^{-1}|,\,\,\,r|x|=|D_r( x)|,\,\,\,\textnormal{ and }|x|=0 \textnormal{ if and only if  }x=e_{G},
\end{equation}
where $e_{G}$ is the identity element of $G.$ It satisfies a triangle
inequality with a constant: there exists a constant $\gamma\geq 1$ such that $|xy|\leq \gamma(|x|+|y|).$ As usual, the ball of radius $r>0,$ is defined as 
\begin{equation*}
    B(x,r)=\{y\in G:|y^{-1}x|<r\}.
\end{equation*}Then $BMO(G)$ is the space of locally integrable functions $f$ satisfying
\begin{equation*}
    \Vert f\Vert_{BMO(G)}:=\sup_{\mathbb{B}}\frac{1}{|\mathbb{B}|}\smallint\limits_{\mathbb{B}}|f(x)-f_{\mathbb{B}}|dx<\infty,\textnormal{ where  } f_{\mathbb{B}}:=\frac{1}{|\mathbb{B}|}\smallint\limits_{\mathbb{B}}f(x)dx,
\end{equation*}
and $\mathbb{B}$ ranges over all balls $B(x_{0},r),$ with $(x_0,r)\in G\times (0,\infty).$ The Hardy space $H^1(G)$ will be defined via the atomic decomposition. Indeed, $f\in H^1(G),$ if and only if, $f$ can be expressed as $f=\sum_{j=1}^\infty c_{j}a_{j},$ where $\{c_j\}_{j=1}^\infty$ is a sequence in $\ell^1(\mathbb{N}_0),$ and every function $a_j$ is an atom, i.e., $a_j$ is supported in some ball $B_j,$ $\smallint_{B_j}a_{j}(x)dx=0,$ and 
\begin{equation*}
    \Vert a_j\Vert_{L^\infty(G)}\leq \frac{1}{|B_j|}.
\end{equation*} The norm $\Vert f\Vert_{H^1(G)}$ is the infimum over  all possible series $\sum_{j=1}^\infty|c_j|.$ Furthermore $BMO(G)$ is the dual of $H^1(G),$ (see Folland and Stein \cite{FollandStein1982}). This can be understood in the following sense:
\begin{itemize}
    \item[(a).] If $\phi\in BMO(G), $ then $\Phi: f\mapsto \smallint\limits_{G}f(x)\phi(x)dx,$ admits a bounded extension on $H^1(G).$
    \item[(b).] Conversely, every continuous linear functional $\Phi$ on $H^1(G)$ arises as in $\textnormal{(a)}$ with a unique element $\phi\in BMO(G).$
\end{itemize} The norm of $\phi$ as a linear functional on $H^1(G)$ is equivalent with the $BMO(G)$-norm. Important properties of the $BMO(G)$ and the $H^1(G)$ norms are the following,
\begin{equation}\label{BMOnormduality}
 \Vert f \Vert_{BMO(G)}  =\sup_{\Vert g\Vert_{H^1(G)}=1} 
\left| \smallint\limits_{G}f(x)g(x)dx\right|,\end{equation}
\begin{equation}\label{BMOnormduality'}\Vert g \Vert_{H^1(G)}  =\sup_{\Vert f\Vert_{BMO(G)}=1} 
\left| \smallint\limits_{G}f(x)g(x)dx\right|.
\end{equation}

\section{Proof of the main theorem}\label{ProofMainTh}

Before presenting the proof of our main result (Theorem \ref{main:th:2}) we present some auxiliary facts to be used in our further analysis.
\begin{remark}\label{remark31}
For the proof of the boundedness Theorem \ref{main:th:2} let us use  the equivalence of norms (whose proof follows the lines of its Euclidean analogue, see Duoandikoetxea \cite[Proposition 6.15, Page 117]{Duoandikoetxea2000})
\begin{equation}\label{eq:bmo:a}
  \Vert f \Vert_{BMO}\sim  \sup_{\delta>0,\,x_0\in G}\inf_{a\in \mathbb{C}}   \frac{1}{|B(x_0,\delta)|}\smallint\limits_{B(x_0,\delta)}|f(x)-a|dx,
\end{equation}  for any $f\in BMO(G).$ Indeed, by the definition of the $BMO$-norm, 
\begin{align*}
    \Vert f \Vert_{BMO}=\sup_{\delta>0,\,x_0\in G}\frac{1}{|B(x_0,\delta)|}\smallint\limits_{B(x_0,\delta)}|f(x)-f_{B}|dx,\,f_{B}:=\frac{1}{|B(x_0,\delta)|}\smallint f(x)dx
\end{align*}
it is clear that 
\begin{align*}
    \sup_{\delta>0}\inf_{a\in \mathbb{C}}   \frac{1}{|B(x_0,\delta)|}\smallint\limits_{B(x_0,\delta)}|f(x)-a|dx\leq \Vert f \Vert_{BMO}.
\end{align*}On the other hand, note that for any $a\in \mathbb{C},$ and all balls $B=B(x_0,\delta),$ the triangle inequality gives the estimate
\begin{align*}
    \smallint\limits_B|f(x)-f_B|dx\leq  \smallint\limits_B|f(x)-a|dx+ \smallint\limits_B|a-f_B|dx\leq 2 \smallint\limits_B|f(x)-a|dx.
\end{align*}Now, by dividing both sides by the Haar measure of $B,$ by taking the infimum over $a\in \mathbb{C}$ and the supremun over $B$ we conclude that
\begin{align*}
    \Vert f\Vert_{BMO}\leq 2 \sup_{\delta>0,\,x_0\in G}\inf_{a\in \mathbb{C}}   \frac{1}{|B(x_0,\delta)|}\smallint\limits_{B(x_0,\delta)}|f(x)-a|dx,
\end{align*}as desired.

\end{remark}

\begin{remark}\label{remark32}
We record that if $G$ is a graded Lie group, a quasi-distance on $G$ is symmetric (that is, $|x|=|x^{-1}|$ for any $x\in G$), see \eqref{symmetric:prop} or  \cite[Page 109]{FischerRuzhanskyBook}.  On the other hand, the Carnot-Carath\'eodory distance on a Lie group of polynomial growth $G$  is also symmetric  in view of its $G$-invariance (see Coulhon, Saloff-Coste and Varopoulos \cite[Page 40]{Saloff-CosteBook}).   

\end{remark}

When estimating the action of singular integrals  on functions supported in small balls we require suitable versions of Sobolev inequalities. We record it in the following  remarks.
\begin{remark}[Sobolev inequality on unimodular Lie groups]\label{Sob:comp} Let $G$ be a unimodular Lie group and let $\mathcal{L}$ be a positive sub-Laplacian associated to a H\"ormander system of vector fields $X=\{X_i\}_{i=1}^k$. For $0< p<\infty,$ the subelliptic $L^p$-Sobolev space of order $s\in\mathbb{R},$ is defined by the family of distributions $f\in \mathscr{D}'(G)$ such that  
\begin{equation*}
     \Vert f \Vert_{ L^{p,\mathcal{L}}_{s}(G) }:=\Vert  (1+\mathcal{L})^{\frac{s}{2}} f \Vert_{ L^{p}(G) }<\infty.
 \end{equation*}In our further analysis we will make use of the following Sobolev inequality (see e.g. Coulhon, Saloff-Coste and Varopoulos \cite{Saloff-CosteBook})
\begin{equation}\label{SI:unimodular}
    \Vert f \Vert_{L^p(G)}\leq C\Vert (1+\mathcal{L})^{\frac{a}{2}}f \Vert_{L^q(G)},\quad a=d_0\left(\frac{1}{q}-\frac{1}{p}\right),\,1<q<p<\infty,
\end{equation}where $d_0$ is the local dimension of $(G,X).$ Note that \eqref{SI:unimodular} covers the case where $G$ is of polynomial growth.
\end{remark}
\begin{remark}[Sobolev inequality on graded Lie groups]\label{Sob:graded}
Let $G$ be a graded Lie group and let $\mathcal{R}$ be a positive Rockland operator of homogeneous degree $\nu>0.$ For $0< p<\infty,$ the  $L^p$-Sobolev space of order $s\in\mathbb{R},$ on $G$ is defined by the family of distributions $f\in \mathscr{D}'(G)$ such that  
\begin{equation*}
     \Vert f \Vert_{ L^{p}_{s}(G) }:=\Vert  (1+\mathcal{R})^{\frac{s}{\nu}} f \Vert_{ L^{p}(G) }<\infty.
 \end{equation*}In our further analysis we will make use of the following Sobolev inequality on graded groups (see \cite[Page 1674]{FischerRuzhansky2017})
\begin{equation}
    \Vert f \Vert_{L^p(G)}\leq C\Vert (1+\mathcal{R})^{\frac{a}{\nu}} f\Vert_{L^q(G)},\quad a=Q\left(\frac{1}{q}-\frac{1}{p}\right),\,1<q<p<\infty,
\end{equation}where $Q$ is the homogeneous of the group.
\end{remark}

\begin{remark}
Note that the condition
    \eqref{Fourier:growth:1} and the Plancherel theorem   imply the $L^2$-boundedness of the operator $(1+\mathcal{R})^{\frac{Q\theta}{2\nu}}T,$  since   in terms of the group Fourier transform we have that
 \begin{equation*}
   (1+\mathcal{R})^{\frac{Q\theta}{2\nu}}Tf(x)=\smallint_{\widehat{G}}\textnormal{Tr}[\pi(x)(1+\pi(\mathcal{R}))^{\frac{Q\theta}{2\nu}}\widehat{K}(\pi)\widehat{f}(\pi)]d\pi,\,f\in C^{\infty}_0(G). 
\end{equation*} Note that in the previous identity we have used that for any $s\in \mathbb{R},$ $$\mathscr{F}[(1+\mathcal{R})^{\frac{s}{\nu}}\delta_e])(\pi)=(1+\pi({\mathcal{R}}))^{\frac{s}{\nu}},$$ in view of the functional calculus for Rockland operators, see \cite[Chapter IV]{FischerRuzhanskyBook}. Here $\delta_e$ denotes the delta distribution at the identity $e\in G.$   Observe that we have written $d\pi=d\nu(\pi)$ for the Plancherel measure on $\widehat{G}.$ Similarly, the condition in \eqref{Fourier:growth:1New} is equivalent to the $L^2$-boundedness of the operator $T(1+\mathcal{R})^{\frac{Q\theta}{2\nu}}.$
\end{remark}

\begin{proof}[Proof of Theorem \ref{main:th:2}]
Let $e\in G$ be the identity element of $G$, and for any $z\in G,$ let $B(z,\delta)$ be the ball of radius $\delta>0$ centred at $z.$ For simplicity for $z=e$ we will denote $$B(\delta):=B(e,\delta),\,\quad |B(\delta)|:=\textnormal{Vol}(B(\delta)).$$

We want to prove the boundedness inequality
\begin{equation}\label{ineq:delta:1}
 \sup_{\delta>0}\inf_{a\in \mathbb{C}}   \frac{1}{|B(x_0,\delta)|}\smallint\limits_{B(x_0,\delta)}|Tf(x)-a|dx\leq C\Vert f\Vert_{L^\infty(G)}.
\end{equation}
The equivalence in \eqref{eq:bmo:a}  provides 
a  way to show that $Tf\in BMO$ without using its average on $B(x_0,\delta)$: it suffices  to 
find  a  constant $a$  (that can depend on $B(x_0,\delta)$).

The first step of the proof is to reduce the estimate to the case where $x_0=e,$ that is
\begin{equation}\label{ineq:delta:2}
 \sup_{\delta>0}\inf_{a\in \mathbb{C}}   \frac{1}{|B(\delta)|}\smallint\limits_{B(\delta)}|Tf(x)-a|dx\leq C\Vert f\Vert_{L^\infty(G)}.
\end{equation}
For this, let us use the left-invariance of $T.$ Indeed, observe that the identities
\begin{align*}
     \sup_{\delta>0}\inf_{a\in \mathbb{C}}   \frac{1}{|B(x_0,\delta)|}\smallint\limits_{B(x_0,\delta)}|Tf(x)-a|dx &=\sup_{\delta>0}\inf_{a\in \mathbb{C}}   \frac{1}{|B(\delta)|}\smallint\limits_{B(\delta)}|Tf(x_0z)-a|dz\\
     &=\sup_{\delta>0}\inf_{a\in \mathbb{C}}   \frac{1}{|B(\delta)|}\smallint\limits_{B(\delta)}|T(f(x_0\cdot))(z)-a|dz
\end{align*}prove the equivalence between \eqref{ineq:delta:1} and \eqref{ineq:delta:2}. So, in order to prove \eqref{ineq:delta:2} let us fix  a function $f\in L^\infty(G),$ and let us consider the decomposition
$$  f=f_1+f_2, \quad \, f_1=f\cdot 1_{ \{x\in G: |x|\leq 2\delta^{1-\theta}  \}},\quad f_2=f\cdot 1_{ \{x\in G: |x|> 2\delta^{1-\theta}  \}}, $$ in the case where $0<\delta<1.$ This is the relevant case. Indeed,   if $\delta>1,$ we take $c>0$ small enough so that $c\delta^{1-\theta}<\delta+1.$ Then, define $f_1(x)=f(x)1_{\{x\in G:|x|\leq c\delta^{1-\theta}\} }.$ Note that $Tf_2=0,$ where $f_2=f-f_1.$ The $L^2$-boundedness of $T$ implies that
$$  \frac{1}{|B(\delta)|}|Tf(x)|dx=\frac{1}{|B(\delta)|}|Tf_1(x)|dx\lesssim\delta^{-\frac{Q}{2}} \Vert Tf_1\Vert_{L^2}\lesssim \delta^{-\frac{Q}{2}}\delta^{Q/2}\Vert f\Vert_{L^\infty}. $$
Then we have that
\begin{equation}\label{deltagrandequeuno}
 \sup_{\delta>1}\inf_{a\in \mathbb{C}}   \frac{1}{|B(x_0,\delta)|}\smallint\limits_{B(x_0,\delta)}|Tf(x)-a|dx\leq C\Vert f\Vert_{L^\infty(G)}.
\end{equation}Note that a similar analysis can be applied to the adjoint $T^*$ of $T$ in order to get the estimate 
\begin{equation}\label{deltagrandequeuno2}
 \sup_{\delta>1}\inf_{a\in \mathbb{C}}   \frac{1}{|B(x_0,\delta)|}\smallint\limits_{B(x_0,\delta)}|T^*f(x)-a|dx\leq C\Vert f\Vert_{L^\infty(G)}.
\end{equation}
Indeed, by the duality argument the $L^2$-boundedness of $T$ and $T^*$ are equivalent facts.

We illustrate the decomposition for the case $\delta\in (0,1)$  of the domain of $f$ in the case of the $2D$-torus $G=\mathbb{T}^2$  in  Figure \ref{Figure 1}.
\begin{figure}[h]
\includegraphics[width=6cm]{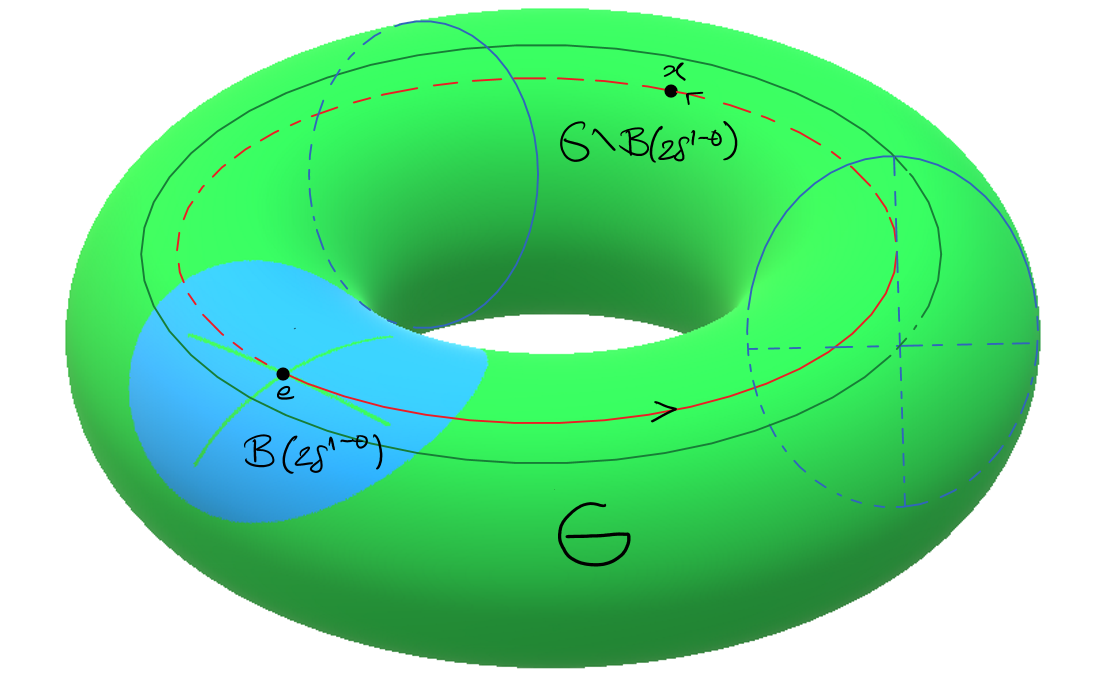}\\
\caption{}
\label{Figure 1}
\centering
\end{figure}
So, for any $0<\delta<1$ we are going to prove the estimate
\begin{equation}\label{f1estimate}
     \frac{1}{|B(\delta)|}\smallint\limits_{B(\delta)}|Tf_1(x)|dx
    \lesssim \Vert f\Vert_{L^\infty(G)},
\end{equation}
if one of the conditions \eqref{Fouriergrowth:2} or \eqref{Fourier:growth:1} hold true,  and the estimate 
\begin{equation}\label{New:adjoint}
    \frac{1}{|B(\delta)|}\smallint\limits_{B(\delta)}|T^{*}f_1(x)|dx
    \lesssim \Vert f\Vert_{L^\infty(G)},
\end{equation} in the case where one of the conditions  \eqref{Fouriergrowth:2New2} or \eqref{Fourier:growth:1New} remain valid, with $T^*$ being the $L^2$-adjoint of $T.$ For this we will analyse separately two cases: the first one, is the case where $G$ is a Lie group of polynomial growth endowed with a H\"ormander system of vector fields $X=\{X_{i}\}_{i=1}^k,$ while in the second one  $G$ is a graded Lie group of homogeneous dimension $Q=\nu_1+\cdots+ \nu_n$  where a Rockland operator $\mathcal{R}$ is fixed. 
\begin{itemize}
    \item Case 1.  Let us assume that $G$ is a Lie group of of polynomial growth and let $$ \mathcal{L}=-\sum_{i=1}^kX_i^2$$ be a positive sub-Laplacian associated  with a H\"ormander system of vector fields $X=\{X_{i}\}_{i=1}^k$. Let $d=d_0$ be the local dimension of $(G,X).$
    \begin{itemize}
        \item Case 1a. Assume that $(1+\mathcal{L})^{\frac{d_0\theta}{4}} T$ is bounded on $L^2.$  Using the  Sobolev inequality (see Remark \ref{Sob:comp}) we have
\begin{align*}
  \Vert Tf_1 \Vert_{L^p(G)}= \Vert (1+\mathcal{L})^{-\frac{d_0\theta}{4}}(1+\mathcal{L})^{\frac{d_0\theta}{4}} Tf_1 \Vert_{L^p(G)}\leq C\Vert (1+\mathcal{L})^{\frac{d_0\theta}{4}} Tf_1 \Vert_{L^2(G)}   
\end{align*}
with $\frac{1}{p}=\frac{1-\theta}{2}.$ Now, using that the operator $(1+\mathcal{L})^{\frac{d_0\theta}{4}} T$ is bounded on $L^2(G),$ and that the group $G$ satisfies the global doubling condition, we have that
\begin{align*}
  \Vert Tf_1 \Vert_{L^p(G)} &\lesssim \Vert f_1 \Vert_{L^2(G)}\leq \Vert f \Vert_{L^\infty(G)}|B(e,2\delta^{1-\theta})|^{\frac{1}{2}}\lesssim\Vert f \Vert_{L^\infty(G)}|B(\delta^{1-\theta})|^{\frac{1}{2}}\\
  &\lesssim\Vert f \Vert_{L^\infty(G)}|B(\delta)|^{\frac{1-\theta}{2}}.
\end{align*}In consequence
\begin{align*}
    \frac{1}{|B(\delta)|}\smallint\limits_{B(\delta)}|Tf_1(x)|dx
    &\lesssim\frac{1}{|B(\delta)|}\left(\smallint\limits_{B(\delta)}|Tf_1(x)|^pdx\right)^{\frac{1}{p}}|B(\delta)|^{\frac{1}{p'}}\\
    &\leq|B(\delta)|^{-1}\Vert Tf_{1}\Vert_{L^p(G)}|B(\delta)|^{\frac{(p-1)}{p}}.
\end{align*}
Observing that $\frac{p-1}{p}=1-\frac{1}{p}=\frac{1+\theta}{2},$ we have
\begin{align*}
  \frac{1}{|B(\delta)|}\smallint\limits_{B(\delta)}|Tf_1(x)|dx
    &\lesssim|B(\delta)|^{-1}|B(\delta)|^{\frac{(1-\theta)}{2}} |B(\delta)|^{\frac{(p-1)}{p}} \Vert f \Vert_{L^\infty(G)}\\
    &\lesssim|B(\delta)|^{ -1+\frac{(1-\theta)}{2} +\frac{(1+\theta)}{2} }\Vert f \Vert_{L^\infty(G)}=\Vert f \Vert_{L^\infty(G)}. 
\end{align*}

    \item Case 1b. Now, assume that $T(1+\mathcal{L})^{\frac{d_0\theta}{4}}$ is bounded on $L^2(G).$ Observe also that, by following the argument in Case 1a above we also can prove the following estimate for the adjoint operator $T^{*}$ of $T,$
\begin{equation}\label{second:cond: ope}
  \frac{1}{|B(\delta)|}\smallint\limits_{B(\delta)}|T^{*}f_1(x)|dx\lesssim \Vert f\Vert_{L^\infty(G)},  
\end{equation}   indeed, such a estimate is independent of the kernel conditions. We start by observing that
\begin{align*}
    \Vert T^{*}f_1\Vert_{L^p(G)} = \Vert (1+\mathcal{L})^{-\frac{d_0\theta}{4}}(1+\mathcal{L})^{\frac{d_0\theta}{4}} T^*f_1 \Vert_{L^p(G)}\leq C\Vert (1+\mathcal{L})^{\frac{d_0\theta}{4}} T^*f_1 \Vert_{L^2(G)},
\end{align*} in view of the Sobolev inequality.
Note that we have the identity of operators
$$ (1+\mathcal{L})^{\frac{d_0\theta}{4}} T^*=[T(1+\mathcal{L})^{\frac{d_0\theta}{4}} ]^* . $$
Since
$T(1+\mathcal{L})^{\frac{d_0\theta}{4}} $ is bounded on $L^2(G),$ we also have that its adjoint $(1+\mathcal{L})^{\frac{d_0\theta}{4}}T^*$ is bounded on $L^2(G).$ In consequence we have the estimate
\begin{align*}
    \Vert (1+\mathcal{L})^{\frac{d_0\theta}{4}} T^*f_1 \Vert_{L^2(G)}\leq C\Vert f_1\Vert_{L^2(G)}.
\end{align*}So, we can proceed as in Case 1a in order to get the estimate in \eqref{second:cond: ope}.
\end{itemize}

\item Case 2. Let us assume that $G$ is a graded Lie group and let $\mathcal{R}$ be a positive Rockland operator of homogeneous degree $\nu>0.$ Using the same approach like in Case 1, we can estimate
\begin{align*}
  \Vert Tf_1 \Vert_{L^p(G)}= \Vert (1+\mathcal{R})^{-\frac{Q\theta}{2\nu}}(1+\mathcal{R})^{\frac{Q\theta}{2\nu}} Tf_1 \Vert_{L^p(G)}\leq C\Vert (1+\mathcal{R})^{\frac{Q\theta}{2\nu}} Tf_1 \Vert_{L^2(G)},   \, 
\end{align*}where we have used the Sobolev inequality on graded groups (see Remark \ref{Sob:graded}), with $\frac{1}{p}=\frac{1-\theta}{2}.$ Now, if we use the $L^2$-boundedness of the operator $(1+\mathcal{R})^{\frac{Q\theta}{2\nu}} T,$ we can proceed as in Case 1 in order to get \eqref{f1estimate} by observing that for all $\delta>0,$ $|B(\delta)|\sim \delta^{Q}$. Also, if on the other hand, we have the $L^2$-boundedness of the operator  $T(1+\mathcal{R})^{\frac{Q\theta}{2\nu}} $ we can conclude the estimate in \eqref{New:adjoint}, by observing again that for all $\delta>0,$ $|B(\delta)|\sim \delta^{Q},$ and the estimate
\begin{align*}
  \Vert T^*f_1 \Vert_{L^p(G)} &= \Vert (1+\mathcal{R})^{-\frac{Q\theta}{2\nu}}(1+\mathcal{R})^{\frac{Q\theta}{2\nu}} T^*f_1 \Vert_{L^p(G)}\leq C\Vert [T(1+\mathcal{R})^{\frac{Q\theta}{2\nu}}]^* f_1 \Vert_{L^2(G)}\\
  \,& \leq C\Vert [T(1+\mathcal{R})^{\frac{Q\theta}{2\nu}}]^*\Vert_{\mathcal{B}(L^2)} \Vert f_1 \Vert_{L^2(G)}\\
  & =C\Vert T(1+\mathcal{R})^{\frac{Q\theta}{2\nu}}\Vert_{\mathcal{B}(L^2)}\Vert f_1 \Vert_{L^2(G)}\lesssim\Vert f_1\Vert_{L^2(G)}.
\end{align*}
\end{itemize}

Now, let us apply the $BMO$-property in Remark \ref{remark31}. Again, we will consider two cases.

\begin{itemize}
    \item Case A. Let us assume that the kernel of  $T$ satisfies the estimate
 $$  [K]_{H_{\infty,\theta}}:=\sup_{0<R<1}\frac{1}{|B(R)| }\smallint\limits_{|y|<R} \smallint\limits_{|x|\geq 2R^{1-\theta}}|K(xy^{-1})-K(x)|dx   <\infty,$$ and one of the conditions \eqref{Fouriergrowth:2} or \eqref{Fourier:growth:1} hold true.  
 Define $$a_\delta:=\smallint\limits_{G}K(y^{-1})f_{2}(y)dy.$$ Then,
\begin{align*}
    |Tf_{2}(x)-a_\delta|\leq \smallint\limits_{|y|\geq 2\delta^{1-\theta}}|K(y^{-1}x)-K(y^{-1})||f_2(y)|dy=\smallint\limits_{|y^{-1}|\geq 2\delta^{1-\theta}}|K(yx)-K(y)||f(y^{-1})|dy.
\end{align*}
Now, using Remark \ref{remark32} we have the property $|x|=|x^{-1}|$ for any $x\in G$  and consequently
\begin{align*}
  &\frac{1}{|B(\delta)|}\smallint_{B(\delta)}  |Tf_{2}(x)-a_\delta|dx\\ &\lesssim\frac{1}{|B(\delta)|}\smallint\limits_{|x|\leq \delta}\smallint\limits_{|y^{-1}|\geq 2\delta^{1-\theta}}|K(yx)-K(y)||f(y^{-1})|dy\,dx\\
  &\leq \frac{1}{|B(\delta)|}\smallint\limits_{|x^{-1}|\leq \delta}\smallint\limits_{|y^{-1}|\geq 2\delta^{1-\theta}}|K(yx^{-1})-K(y)|dydx\Vert f \Vert_{L^\infty(G)}\\
  &= \frac{1}{|B(\delta)|}\smallint\limits_{|x|\leq \delta}\smallint\limits_{|y|\geq 2\delta^{1-\theta}}|K(yx^{-1})-K(y)|dydx\Vert f \Vert_{L^\infty(G)}\\
   &= \frac{1}{|B(\delta)|}\smallint\limits_{|y|\leq \delta}\smallint\limits_{|x|\geq 2\delta^{1-\theta}}|K(xy^{-1})-K(x)|dydx\Vert f \Vert_{L^\infty(G)}\lesssim [K]_{\infty,\theta}\Vert f \Vert_{L^\infty(G)}.
\end{align*}
So, we can deduce the estimates
\begin{align*}
  \frac{1}{|B(\delta)|}\smallint_{B(\delta)}  |Tf(x)-a_\delta|dx&\leq  \frac{1}{|B(\delta)|}\smallint\limits_{B(\delta)}|Tf_1(x)|dx\\
  &+ \frac{1}{|B(\delta)|}\smallint_{B(\delta)}  |Tf_{2}(x)-a_\delta|dx\lesssim \Vert f\Vert_{L^\infty},  
\end{align*} for any $\delta\in (0,1).$ Then,  by combining the previous analysis with \eqref{deltagrandequeuno} we get the inequality
\begin{align*}
\Vert Tf\Vert_{BMO}\sim \sup_{\delta>0}   \inf_{a\in \mathbb{C}}\frac{1}{|B(\delta)|}\smallint_{B(\delta)}  |Tf(x)-a|dx\lesssim\Vert f\Vert_{L^\infty(G)}.
\end{align*}
\item Case B. Now, let us consider the case where $T$ satisfies the kernel condition 
$$  [K]_{H_{\infty,\theta}}':=\sup_{0<R<1}\frac{1}{|B(R)|  }\smallint\limits_{|y|<R} \smallint\limits_{|x|\geq 2R^{1-\theta}}|K(y^{-1}x)-K(x)|dx  <\infty,$$ and when one of the conditions  \eqref{Fouriergrowth:2New2} or \eqref{Fourier:growth:1New} remain valid. We have the estimate in \eqref{New:adjoint}, that is 
$$ \frac{1}{|B(\delta)|}\smallint\limits_{B(\delta)}|T^{*}f_1(x)|dx
    \lesssim \Vert f\Vert_{L^\infty(G)}. $$
Also, by replacing the previous analysis with $T^{*}$ instead of $T,$ we have the inequality
\begin{align*}
  &\frac{1}{|B(\delta)|}\smallint_{B(\delta)}  |T^*f_{2}(x)-a_\delta|dx\\ &\lesssim\frac{1}{|B(\delta)|}\smallint\limits_{|y|\leq \delta}\smallint\limits_{|x|\geq 2\delta^{1-\theta}}|K^{*}(xy^{-1})-K^{*}(x)|dydx\Vert f \Vert_{L^\infty(G)},
\end{align*}where $K^*$ is the right-convolution kernel associated to $T^*.$ Using the identity of distributions $K^{*}(x)=\overline{K}(x^{-1}),$ $x\in G,$ we get
\begin{align*}
    \frac{1}{|B(\delta)|}\smallint\limits_{|y|\leq \delta}\smallint\limits_{|x|\geq 2\delta^{1-\theta}}|K^{*}(xy^{-1})-K^{*}(x)|dydx &=\frac{1}{|B(\delta)|}\smallint\limits_{|y|\leq \delta}\smallint\limits_{|x|\geq 2\delta^{1-\theta}}|K(yx^{-1})-K(x^{-1})|dydx\\
    &=\frac{1}{|B(\delta)|}\smallint\limits_{|y^{-1}|\leq \delta}\smallint\limits_{|x^{-1}|\geq 2\delta^{1-\theta}}|K(y^{-1}x)-K(x)|dydx\\
     &=\frac{1}{|B(\delta)|}\smallint\limits_{|y|\leq \delta}\smallint\limits_{|x|\geq 2\delta^{1-\theta}}|K(y^{-1}x)-K(x)|dydx\\
     &\lesssim [K]_{H_{\infty,\theta}}'.
\end{align*}So, we can deduce the estimate
\begin{align*}
  \frac{1}{|B(\delta)|}\smallint_{B(\delta)}  |T^*f(x)-a_\delta|dx&\leq  \frac{1}{|B(\delta)|}\smallint\limits_{B(\delta)}|T^*f_1(x)|dx\\
  &+ \frac{1}{|B(\delta)|}\smallint_{B(\delta)}  |T^*f_{2}(x)-a_\delta|dx\lesssim \Vert f\Vert_{L^\infty}. 
\end{align*}Using the analysis above for $0<\delta<1$ and the inequality in  \eqref{deltagrandequeuno2} we get
\begin{align*}
\Vert T^*f\Vert_{BMO}\sim \sup_{\delta>0}   \inf_{a\in \mathbb{C}}\frac{1}{|B(\delta)|}\smallint_{B(\delta)}  |T^{*}f(x)-a|dx\lesssim\Vert f\Vert_{L^\infty(G)}.
\end{align*}So, we have proved the boundedness of $T^*$ from $L^\infty(G)$ into $BMO(G),$ and hence, by the duality argument, we deduce that $T$ is bounded from $H^{1}(G)$ into $L^1(G).$ 
\end{itemize}The proof of Theorem \ref{main:th:2} is complete. 
\end{proof}

\section{Examples}\label{Examples} We end this paper by illustrating our main Theorem \ref{main:th:2} with some (possibly well known) examples (with the exception of Subsection \ref{LGPGTI}). Applications of Theorem \ref{main:th:2} in the context of graded Lie groups can be found in the second part of this work, namely,  in \cite{BjorkSjolin}. We start with the case of a compact Lie group endowed with its standard  Riemannian structure. 

\subsection{Riemannian structures on compact Lie groups}\label{example:G:riem}
Let $G$ be a compact Lie group. Assume $G$ to be endowed with its unique (up to a constant) bi-invariant Riemannian metric. Let $\mathcal{L}_G$ be the positive Laplace-Beltrami operator on $G.$ For an orthonormal basis $\{X_i\}_{i=1}^n,$ $n=\dim(G),$ with respect to the aforementioned  Riemannian metric, we have
\begin{equation*}
    \mathcal{L}_G=-(X_1^2+\cdots +X_n^2).
\end{equation*}If we replace the sub-Laplacian $\mathcal{L}$ by the Laplace-Beltrami operator $\mathcal{L}_G,$ the corresponding Carnot-Carath\'eodory distance associated with $\mathcal{L}$ can be replaced by the standard geodesic distance on $G,$ and in this context $d_0$ in Theorem \ref{main:th:2} is the topological dimension $n.$

The $L^2$-spectrum of the Laplace-Beltrami operator $\textnormal{Spect}(\mathcal{L}_G)=\{\lambda_{[\xi]}\}_{[\xi]\in \widehat{G}}$ can be enumerated by using the unitary dual of $G$, and the Fourier transform of its right-convolution kernel  takes the form
\begin{equation}\label{LGriem}
    \widehat{\mathcal{L}}_{G}(\xi)=\lambda_{[\xi]}I_{H_\xi}, \,[\xi]\in \widehat{G},
\end{equation}where $I_{H_\xi}$ is the identity operator on any representation space $H_\xi.$ The condition in \eqref{Fouriergrowth:2} is equivalent to the following estimate
\begin{equation}\label{eq:Ft:G}
    \sup_{[\xi]\in \widehat{G}}\Vert \langle \xi\rangle^{\frac{n\theta}{2}}\widehat{K}(\xi) \Vert_{\textnormal{op}}<\infty,\,\,\ \langle \xi\rangle:=(1+\lambda_{[\xi]})^{\frac{1}{2}}.
\end{equation}Observe that \eqref{eq:Ft:G} is equivalent to the 
existence of a finite constant $C>0,$ such that
\begin{equation}\label{LxiG}
  \forall [\xi]\in \widehat{G},\,\,  \Vert \widehat{K}(\xi)\Vert_{\textnormal{op}}\leq C\langle \xi\rangle^{-\frac{n\theta}{2}},\,\ \langle \xi\rangle:=(1+\lambda_{[\xi]})^{\frac{1}{2}}.
\end{equation} In view of Theorem \ref{main:th:2}, a convolution operator $T$ (defined via $Tf=f\ast K,$ $f\in C^\infty(G)$) associated to a right-convolution kernel $K,$ satisfying  \eqref{LxiG} and the kernel condition
\begin{equation}\label{GS:CZ:cond:2':2}
        [K]_{H_{\infty,\theta}}:=\sup_{0<R<1}\sup_{|y|<R} \smallint\limits_{|x|\geq 2R^{1-\theta}}|K(xy^{-1})-K(x)|dx   <\infty,
    \end{equation}  admits a bounded extension from  $L^\infty(G) $ into $ BMO(G).$ Moreover, if $K$ satisfies \eqref{LxiG} and the kernel condition
    \begin{equation}\label{GS:CZ:cond:22':2}
        [K]_{H_{\infty,\theta}}':=\sup_{0<R<1}\sup_{|y|<R} \smallint\limits_{|x|\geq 2R^{1-\theta}}|K(y^{-1}x)-K(x)|dx  <\infty,
    \end{equation} then $T$ admits a bounded extension from $H^1(G)$ into $L^1(G).$

Naturally, the family of compact Lie groups includes  torus of arbitrary dimension. In view of the simplicity of the representation theory on the torus  we separately analyse it in the following subsection. 

\subsection{Oscillating integrals on the torus $\mathbb{T}^n$}
 Let  $G=\mathbb{T}^n\equiv [0,1)^n,$ $0\sim 1,$ be the $n$-dimensional torus $\mathbb{R}^n/\mathbb{Z}^n.$ In this case we have the identification $\widehat{\mathbb{T}}^n=\{e_{\ell}\}_{\ell\in \mathbb{Z}^n}\sim \mathbb{Z}^{n}$ for the unitary dual of the torus. We have denoted $e_{\ell}$ to the exponential function $e_{\ell}(x)=e^{i2\pi \ell\cdot x},$ $x=(x_1,\cdots ,x_n)\in \mathbb{T}^n.$

The spectrum of the Laplace-Beltrami operator $\mathcal{L}_{\mathbb{T}^n}=-\sum_{i=1}^n\partial_{x_i}^2$ is given by
\begin{equation}
 \textnormal{Spect}(\mathcal{L}_{\mathbb{T}^n})=\{4\pi^{2}\ell^2,\,\ell \in \mathbb{Z}^n\}.   
\end{equation}So, all the representations of the torus are one-dimensional and \eqref{LGriem} takes the form
\begin{equation}
    \widehat{\mathcal{L}}_{\mathbb{T}^n}(\ell)=4\pi^{2}\ell^2,\,\,\ell \in \mathbb{Z}^n.
\end{equation}In terms of the Fourier transform of a distribution $K$ on the torus
\begin{equation}
    \widehat{K}(\ell):=\smallint_{\mathbb{T}^n}e_{-\ell}(x)K(x)dx,\,\,\ell \in \mathbb{Z}^n,
\end{equation}the Fourier transform condition \eqref{LxiG} becomes equivalent to the estimate
\begin{equation}\label{LxiT:n}
  \forall \ell\in \mathbb{Z}^n,\,\,  | \widehat{K}(\ell)|\leq C\langle \ell\rangle^{-\frac{n\theta}{2}},\,\ \langle \ell\rangle:=(1+4\pi^2|\ell|^2)^{\frac{1}{2}}\sim |\ell|:=\sqrt{\ell_1^2+\cdots+ \ell_n^2}.
\end{equation}The commutativity of the torus makes \eqref{GS:CZ:cond:2':2} and \eqref{GS:CZ:cond:22':2} equivalent to the following kernel condition 
\begin{equation}\label{Con:kern:torus}
    [K]_{H_{\infty,\theta}}=[K]_{H_{\infty,\theta}}'=\sup_{0<R<1}\sup_{|y|<R} \smallint\limits_{|x|\geq 2R^{1-\theta}}|K(x-y)-K(x)|dx dy  <\infty.
\end{equation}In view of Theorem \ref{main:th:2}, a convolution operator $T$ associated to a convolution kernel $K,$ satisfying the Fourier transform condition \eqref{LxiT:n} and the smoothness condition  \eqref{Con:kern:torus}   admits a bounded extension from  $L^\infty(\mathbb{T}^n) $ into $ BMO(\mathbb{T}^n),$ and   from $H^1(\mathbb{T}^n)$ into $L^1(\mathbb{T}^n).$

\subsection{Oscillating integrals on $\textnormal{SU}(2)\cong \mathbb{S}^3$}\label{SU2}Let us consider the compact Lie group of complex unitary $2\times 2$-matrices  
$$ \textnormal{SU}(2)=\{X=[X_{ij}]_{i,j=1}^{2}\in \mathbb{C}^{2\times 2}:X^{*}=X^{-1}\},\,X^*:=\overline{X}^{t}=[\overline{X_{ji}}]_{i,j=1}^{2}. $$
Let us consider the left-invariant first-order  differential operators $$\partial_{+},\partial_{-},\partial_{0}: C^{\infty}(\textnormal{SU}(2))\rightarrow C^{\infty}(\textnormal{SU}(2)),$$ called creation, annihilation, and neutral operators respectively, (see Definition 11.5.10 of \cite{Ruz}) and let us define 
\begin{equation*}
    X_{1}=-\frac{i}{2}(\partial_{-}+\partial_{+}),\, X_{2}=\frac{1}{2}(\partial_{-}-\partial_{+}),\, X_{3}=-i\partial_0,
\end{equation*}where $X_{3}=[X_1,X_2],$ based on the commutation relations $[\partial_{0},\partial_{+}]=\partial_{+},$ $[\partial_{-},\partial_{0}]=\partial_{-},$ and $[\partial_{+},\partial_{-}]=2\partial_{0},$ the system $X=\{X_1,X_2\}$ satisfies the H\"ormander condition at step $\kappa=2,$ and the Hausdorff dimension defined by the control distance associated to the sub-Laplacian   $\mathcal{L}_{sub}=-X_1^2-X_2^2$  is $Q=4.$ By the compactness of $\textnormal{SU}(2),$ we have that $d_0=Q=4$ and $d_\infty=0.$

We record that the unitary dual of $\textnormal{SU}(2)$ (see \cite{Ruz}) can be identified as
\begin{equation}
\widehat{\textnormal{SU}}(2)\equiv \{ [t_{l}]:2l\in \mathbb{N}, d_{l}:=\dim t_{l}=(2l+1)\}\sim \frac{1}{2}\mathbb{N}.
\end{equation}
There are explicit formulae for $t_{l}$ as
functions of Euler angles in terms of the so-called Legendre-Jacobi polynomials, see \cite{Ruz}.

It was proved e.g.  in \cite{RWT}, that the spectrum of the sub-Laplacian $\mathcal{L}_{sub}$ can be indexed by the sequence
$$ \ell(\ell+1)-m^2,\,\,-\ell\leq m\leq \ell,\,m,\ell\in \frac{1}{2}\mathbb{N}, $$
and then that $\widehat{\mathcal{L}}_{sub}(\ell)$ is given by  the $(2\ell+1)\times (2\ell+1) $-diagonal matrix
\begin{equation}
    \widehat{\mathcal{L}}_{sub}(\ell)=\textnormal{diag}[(\ell(\ell+1)-m^2)]_{-\ell\leq m\leq \ell,m\in \frac{1}{2}\mathbb{N}\,},\,\,\ell\in \frac{1}{2}\mathbb{N}.
\end{equation}Since $d_0=Q=4,$ we have that $\frac{d_0 \theta}{4}=\theta,$ and the condition in \eqref{Fouriergrowth:2} takes the form
 \begin{equation}\label{Fouriergrowth:2SU2}
        \sup_{\ell\in\frac{1}{2} \mathbb{N}}\Vert \textnormal{diag}[(1+\ell(\ell+1)-m^2)^{{{{\theta}} }}]_{-\ell\leq m\leq \ell,m\in \frac{1}{2}\mathbb{N}\,}\times\widehat{K}(\ell) \Vert_{\textnormal{op}}<\infty,
    \end{equation}
    where $\widehat{K}(\ell)=\smallint_{\textnormal{SU}(2)}K(Z)t_{\ell}(Z)^{-1}dZ,$ is the Fourier transform of the group of $K.$
    In view of Theorem \ref{main:th:2}, if  
    $K$ satisfies \eqref{Fouriergrowth:2SU2} and the kernel condition 
    \begin{equation}\label{GS:CZ:cond:2'SU2}
        [K]_{H_{\infty,\theta}}:=\sup_{0<R<1}\sup_{|Y|<R} \smallint\limits_{|X|\geq 2R^{1-\theta}}|K(xy^{-1})-K(x)|dx   <\infty,
    \end{equation}  then $T:L^\infty(\textnormal{SU}(2))\rightarrow BMO(\textnormal{SU}(2))$ admits a bounded extension. On the other hand, if $T$ satisfies the estimate
  \begin{equation}\label{Fouriergrowth:2SU2TT}
        \sup_{\ell\in\frac{1}{2} \mathbb{N}}\Vert \widehat{K}(\ell)\times  \textnormal{diag}[(1+\ell(\ell+1)-m^2)^{{{{\theta}} }}]_{-\ell\leq m\leq \ell,m\in \frac{1}{2}\mathbb{N}\,} \Vert_{\textnormal{op}}<\infty,
    \end{equation}  
and the kernel condition   
    \begin{equation}\label{GS:CZ:cond:22'SU2F}
        [K]_{H_{\infty,\theta}}':=\sup_{0<R<1}\sup_{|Y|<R} \smallint\limits_{|X|\geq 2R^{1-\theta}}|K(y^{-1}x)-K(x)|dx  <\infty,
    \end{equation} then $T:H^1(\textnormal{SU}(2))\rightarrow L^1(\textnormal{SU}(2))$ extends to a bounded operator. We observe that in the kernel conditions \eqref{GS:CZ:cond:2'SU2}  and \eqref{GS:CZ:cond:22'SU2F} the sets $\{Y: |Y|<R\}$ and $\{X:|X|\geq 2R^{1-\theta}\},$ and also the space $(H^1(\textnormal{SU}(2)))'=BMO(\textnormal{SU}(2))$ are defined in terms of the Carnot-Carath\'eodory distance associated to the system of vector fields $\{X_1,X_2\}.$ 
    
\begin{remark}
On the other hand, the Carnot-Carath\'eodory distance associated to the vector fields $\{X_1,X_2,X_3\}$ agrees with the geodesic distance, and (minus) the sums of squares
\begin{equation}
    \mathcal{L}_{\textnormal{SU}(2)}=-X_1^2-X_2^2-X_3^2=-\partial_0^2-\frac{1}{2}[\partial_+\partial_{-}+\partial_{-}\partial_{+}],
\end{equation} is the positive Laplacian on $\textnormal{SU}(2).$
Again, by following e.g.  \cite{Ruz},  the spectrum of the positive Laplacian $\mathcal{L}_{\textnormal{SU}(2)}$ can be indexed by the sequence
$$\lambda_\ell:= \ell(\ell+1),\quad \ell\in \frac{1}{2}\mathbb{N}.$$
Because $n=\dim(\textnormal{SU}(2))=3,$ the Fourier transform condition \eqref{LxiG}  takes the form
 \begin{equation}\label{Fouriergrowth:2SU2'}
      \exists C>0,\,\forall  \ell\in\frac{1}{2} \mathbb{N},\quad \Vert \widehat{K}(\ell) \Vert_{\textnormal{op}}\leq C(1+\ell(\ell+1))^{-\frac{3\theta}{4}}\sim (1+\ell)^{-\frac{3\theta}{2}}.
    \end{equation}
In view of  Theorem \ref{main:th:2}, if  
    $K$ satisfies \eqref{Fouriergrowth:2SU2'} and the kernel condition 
    \begin{equation}\label{GS:CZ:cond:2'SU2TTTTTT}
        [K]_{H_{\infty,\theta}}:=\sup_{0<R<1}\sup_{|Y|<R} \smallint\limits_{|X|\geq 2R^{1-\theta}}|K(xy^{-1})-K(x)|dx   <\infty,
    \end{equation}  then $T:L^\infty(\textnormal{SU}(2))\rightarrow BMO(\textnormal{SU}(2))$ admits a bounded extension. On the other hand, if $T$ satisfies \eqref{Fouriergrowth:2SU2'} 
and the kernel condition   
    \begin{equation}\label{GS:CZ:cond:22'SU2FTT}
        [K]_{H_{\infty,\theta}}':=\sup_{0<R<1}\sup_{|Y|<R} \smallint\limits_{|X|\geq 2R^{1-\theta}}|K(y^{-1}x)-K(x)|dx  <\infty,
    \end{equation} then $T:H^1(\textnormal{SU}(2))\rightarrow L^1(\textnormal{SU}(2))$ extends to a bounded operator. Note that the conditions  \eqref{GS:CZ:cond:2'SU2TTTTTT}
    and \eqref{GS:CZ:cond:22'SU2FTT}, the sets $\{Y: |Y|<R\}$ and $\{X:|X|\geq 2R^{1-\theta}\},$ and also the space $(H^1(\textnormal{SU}(2)))'=BMO(\textnormal{SU}(2))$ are defined in terms of geodesic distance.
\end{remark}
    
Now, we are going to present some applications of Theorem  \ref{main:th:2} to (the non-compact case of) graded Lie groups. We start with the Euclidean case.

\subsection{Oscillating integrals on $ \mathbb{R}^n$}
 The unitary dual of $\mathbb{R}^n$ admits the identification $\widehat{\mathbb{R}}^n=\{e_{\xi}:\xi\in \mathbb{R}^n\}\sim \mathbb{R}^n, $ where $e_{\xi}(x):=e^{i2\pi x\cdot \xi}.$ The Fourier transform of a distribution $K$ is given by $$\widehat{K}(\xi)=\smallint_{\mathbb{R}^n}e_{-\xi}(x)K(x)dx,$$ for any $\xi\in \mathbb{R}^n.$ The model Rockland operator $\mathcal{R}$ on $\mathbb{R}^n$ is the positive Laplacian
$ \mathcal{R}=-\Delta_x:=-\partial_{x_1}^2-\cdots - \partial_{x_n}^2,
$  which is homogeneous of degree $\nu=2.$ In this case \eqref{pi:R} becomes to be
\begin{equation*}
    e_{\xi}(-\Delta_x)=4\pi^2|\xi|^2,\,\quad \xi \in \mathbb{R}^n.
\end{equation*}The Fourier transform condition in \eqref{Fourier:growth:1} takes the form
\begin{equation}\label{Ft;C;Rn}
        \sup_{\xi\in \mathbb{R}^n}\vert e_\xi\left((1-\Delta_x)^{\frac{n\theta}{4}}\right)\widehat{K}(\xi) \vert=  \sup_{\xi\in \mathbb{R}^n}|(1+4\pi^2|\xi|^2)^{\frac{n\theta}{4}}\widehat{K}(\xi)|<\infty.
    \end{equation}Note that the  condition in \eqref{Ft;C;Rn} is equivalent to the existence of $C>0$ satisfying that
    \begin{equation}\label{Fefferman:cëx}
   \forall \xi\in \mathbb{R}^n,\,     |\widehat{K}(\xi)|\leq C |(1+4\pi^2|\xi|^2)^{-\frac{n\theta}{4}}\sim C (1+|\xi|)^{-\frac{n\theta}{2}}.
    \end{equation} The commutativity of $\mathbb{R}^n$ makes \eqref{GS:CZ:cond:2} and \eqref{GS:CZ:cond:22} equivalent to the following kernel condition 
\begin{equation}\label{Con:kern:rn}
    [K]_{H_{\infty,\theta}}=[K]_{H_{\infty,\theta}}'=\sup_{0<R<1}\sup_{|y|<R} \smallint\limits_{|x|\geq 2R^{1-\theta}}|K(x-y)-K(x)|dx   <\infty.
\end{equation}In view of Theorem \ref{main:th:2}, a convolution operator $T$ associated to a convolution kernel $K,$ satisfying the Fourier transform condition \eqref{Fefferman:cëx} and the smoothness condition  \eqref{Con:kern:rn}   admits a bounded extension from  $L^\infty(\mathbb{R}^n) $ into $ BMO(\mathbb{R}^n),$ and   from $H^1(\mathbb{R}^n)$ into $L^1(\mathbb{R}^n).$

\subsection{Oscillating integrals on the Heisenberg group $\mathbb{H}_n$}
 Let us consider the Heisenberg group $\mathbb{H}_n$ as the manifold $\mathbb{R}^{2n+1}$ endowed of the product
\begin{equation}
    (x,y,t)\cdot (x',y',z')=(x+x',y+y',t+t'+\frac{1}{2}(xy'-x'y)),\,(x,y,z),(x',y',z')\in \mathbb{H}_n.
\end{equation} The Lie algebra $\mathfrak{h}_n=\textnormal{Lie}(\mathbb{H}_n)$ is spanned by the vector-fields
\begin{equation}
    X_j=\partial_{x_j}-\frac{y_j}{2}\partial_{t},\,\quad Y_j=\partial_{x_j}+\frac{x_j}{2}\partial_{t},\,1\leq j\leq n,\,\quad T=\partial_t. 
\end{equation}The canonical relations are $[X_j,Y_j]=T.$ The positive sub-Laplacian $\mathcal{L}_{sub}$ on the Heisenberg group
$$ \mathcal{L}_{sub}:=-\sum_{j=1}^nX_j^2+Y_j^2=-\sum_{j=1}^n\left(\partial_{x_j}-\frac{y_j}{2}\partial_{t}\right)^2+\left(\partial_{x_j}+\frac{x_j}{2}\partial_{t}\right)^2, $$ is a Rockland operator of homogeneous degree $\nu=2$ and the homogeneous dimension of $\mathbb{H}_n$ is  $Q=2n+2.$ We observe that if $\mathbb{H}_n$ is endowed with the Carnot-Carath\'eodory distance $|\cdot|$ then the local dimension $d_0$ agrees with the global dimension $d_\infty$ and $d_0=d_\infty=Q=2n+2.$ Moreover, on any nilpotent stratified group ones always has that $d_0=d_\infty.$ 

The unitary dual $\widehat{\mathbb{H}}_n$ of $\mathbb{H}_n$ admits the identification
\begin{equation}\label{dual:of:Hn}
   \widehat{\mathbb{H}}_n=\{\pi_{\lambda}:\lambda\neq 0,\,\lambda\in \mathbb{R}\}\sim \mathbb{R}^{*}:=\mathbb{R}\setminus \{0\}. 
\end{equation} In \eqref{dual:of:Hn}, the family $\pi_{\lambda},$ $\lambda\in \mathbb{R}^*,$ are the Schr\"odinger representations, which are the unitary operators on $L^2(\mathbb{R}^n)$ defined via 
\begin{equation*}
    \pi_\lambda (x,y,z) h(u)=e^{i\lambda (t+\frac{1}{2}x\cdot y)}e^{i\sqrt{\lambda}yu}h(u+\sqrt{|\lambda|}x),\,\sqrt{\lambda}:=\frac{\lambda}{|\lambda|}\sqrt{|\lambda|},\,\lambda\in \mathbb{R}^*.
\end{equation*}
In the Heisenberg group setting, for $\mathcal{R}=\mathcal{L},$ \eqref{pi:R} takes the form of the dilated harmonic oscillator
\begin{equation*}
   \pi_\lambda(\mathcal{L}_{sub})=|\lambda|(-\Delta_u+|u|^2), \, \Delta_u:=\sum_{j=1}^n\partial_{u_j}^2,\,u=(u_1,\cdots,u_n)\in \mathbb{R}^n.
\end{equation*}
For a distribution $K$ on $G=\mathbb{H}_n,$ the condition in \eqref{Fourier:growth:1} for the Fourier transform of $K,$
$$ \widehat{K}(\lambda)=\smallint_{\mathbb{H}_n}K(x,y,t)\pi_\lambda(x,y,t)^{-1}d(x,y,t):L^2(\mathbb{R}^n)\rightarrow L^2(\mathbb{R}^n), $$
becomes equivalent to the following Fourier transform condition
\begin{equation}\label{FT:hn:cond}
   \sup_{\lambda\in \mathbb{R}^*}\Vert \pi_\lambda(1+\mathcal{L})^{\frac{Q\theta}{2\nu}}\widehat{K}(\lambda) \Vert_{\textnormal{op}}=\sup_{\lambda\in \mathbb{R}^*}\Vert \left(1+|\lambda|(-\Delta_u+|u|^2)\right)^{\frac{(n+1)\theta}{2}}\widehat{K}(\lambda) \Vert_{\textnormal{op}}<\infty.
\end{equation}
In view of Theorem \ref{main:th:2}, a convolution operator $T$ (defined via $Tf=f\ast K,$ $f\in C^\infty(\mathbb{H}_n)$) associated to a right-convolution kernel $K,$ satisfying  \eqref{FT:hn:cond} and the kernel condition
\begin{equation}
      \sup_{0<R<1}\sup_{\{Y\in \mathbb{H}_n:|Y|<R\}} \smallint\limits_{\{X\in \mathbb{H}_n: |X|\geq 2R^{1-\theta} \}}|K(xy^{-1})-K(x)|dx   <\infty,
    \end{equation}  admits a bounded extension from  $L^\infty(\mathbb{H}_n) $ into $ BMO(\mathbb{H}_n).$ Moreover, if $K$ satisfies  that
    \begin{equation}\label{FT:hn:cond222}
   \sup_{\lambda\in \mathbb{R}^*}\Vert \widehat{K}(\lambda) \pi_\lambda(1+\mathcal{L})^{\frac{Q\theta}{2\nu}} \Vert_{\textnormal{op}}=\sup_{\lambda\in \mathbb{R}^*}\Vert \widehat{K}(\lambda) \left(1+|\lambda|(-\Delta_u+|u|^2)\right)^{\frac{(n+1)\theta}{2}}\Vert_{\textnormal{op}}<\infty,
\end{equation}
     and the kernel condition
    \begin{equation}
        \sup_{0<R<1}\sup_{\{Y\in \mathbb{H}_n:|Y|<R\}} \smallint\limits_{\{X\in \mathbb{H}_n: |X|\geq 2R^{1-\theta} \}}|K(y^{-1}x)-K(x)|dx  <\infty,
    \end{equation} then $T$ admits a bounded extension from the Hardy space $H^1(\mathbb{H}_n)$ into $L^1(\mathbb{H}_n).$

\subsection{Oscillating operators on Lie groups of polynomial growth and of type I}\label{LGPGTI}
Note that Theorem   \ref{main:th:2} has been proved for any Lie group of polynomial growth. However, if we add the hypothesis that $G$ is also of type I, an explicit condition can be given in terms of the Fourier analysis associated to the sub-Laplacian in order to express the $L^2$-boundedness hypothesis in \eqref{Fouriergrowth:2}. However, to do this we require some aspects of the functional calculus for the sub-Laplacian $\mathcal{L}.$ We record it in the following remark.

\begin{remark}[Functional calculus for the sub-Laplacian revisited] Let $\mathcal{L}=-\sum_{i=1}^kX_i^2$ be a positive sub-Laplacian associated to a H\"ormander system of vector-fields $\{X_i\}_{i=1}^k.$ Then $\mathcal{L}$ is formally self-adjoint as an element of the universal enveloping algebra $\mathfrak{u}(\mathfrak{g}).$ Moreover, for a.e. $\xi,$ $\widehat{\mathcal{L}}(\xi)=\sum_{i=1}^kd\xi(X_k)^2$ is essentially self-adjoint on $H_\xi$ and we keep the same notation for its self-adjoint extension. Let $E$ and $E_\xi$ be the spectral measures of $\mathcal{L}$ and $\widehat{\mathcal{L}}(\xi),$ respectively.

Let $B$ be a Borel subset $B\subset \mathbb{R}$ and let $E(B):L^2(G)\rightarrow
L^2(G)$ and $E_\xi(B):H_\xi\rightarrow H_\xi$ be the corresponding projections. In particular $E(B)$ is a left-invariant operator. The group Fourier transform of its convolution kernel $E(B)\delta$ is 
\begin{equation}\label{B}
    \mathscr{F}[E(B)\delta](\xi)=E_\xi(B),\,[\xi]\in \widehat{G}.
\end{equation}One can prove \eqref{B} by following the argument in \cite[Page 182]{FischerRuzhanskyBook}.
Indeed, if $F_\xi:B\mapsto \mathscr{F}[E(B)\delta](\xi), $ then it is not difficult to prove that $F_\xi$ is a spectral resolution on $H_\xi.$
Note that
if $\phi$ is a measurable function on $G,$ then the spectral multiplier $\phi(\mathcal{L})$ is defined via 
$$\phi(\mathcal{L}):=\smallint\limits_{0}^\infty \phi(\lambda) dE(\lambda).$$ Note that $\textnormal{Dom}(\phi(\mathcal{L}))=\{f\in L^2(G):\smallint\limits_{0}^\infty|\phi(\lambda)|^2d\Vert E(\lambda) f\Vert_{L^2(G)}^2<\infty\}.$ On the other hand, if $\phi(\mathcal{L})\delta$ is the right-convolution kernel of $\phi(\mathcal{L}),$ then for any $f\in C^\infty_0(G),$
\begin{equation}\label{F(B)}
    \mathscr{F}[\phi(\mathcal{L})f](\xi) =   \mathscr{F}[\phi(\mathcal{L})\delta](\xi)\widehat{f}(\xi)=\mathscr{F}[\smallint\limits_0^\infty\phi(\lambda)dE(\lambda)f](\xi)=\smallint\limits_0^\infty\phi(\lambda)dF_\xi(\lambda)\widehat{f}(\xi),
\end{equation}
 with $\phi=1_B$  the characteristic function of $B.$ Note that \eqref{F(B)} also holds for a finite linear combination of characteristic functions,
and then, passing through the limit carefully, for any $\phi\in L^\infty(G),$ with $f\in L^2(G)$ or $\phi(\lambda)=\lambda$ for $f\in C^\infty_0(G).$

Consequently,
\begin{align*}
  \mathscr{F}[\mathcal{L}f](\xi) =  \smallint\limits_{0}^\infty\lambda dF_\xi(\lambda)\widehat{f}(\xi)=\mathscr{F}[\smallint\limits_{0}^\infty\lambda dE(\lambda) f](\xi)=\widehat{\mathcal{L}}(\xi)\widehat{f}(\xi),\,f\in C^\infty_0(G),
\end{align*}
{{where $\widehat{\mathcal{L}}(\xi):=\mathscr{F}[\mathcal{L}\delta](\xi),$ $[\xi]\in \widehat{G}.$}}
In view of the Dixmier-Malliavin theorem (see Theorem 1.7.8 of \cite{FischerRuzhanskyBook}), the space of smooth vectors $H_\xi^\infty$ is spanned by elements of the form $\widehat{f}(\xi)v,$ $v\in H_\xi^\infty,$ (in the sense that any vector in $H_\xi^\infty$ is a finite linear combination of vectors of the form $\widehat{f}(\xi)v$), and we have that
\begin{equation*}
    \widehat{\mathcal{L}}(\xi)v=\smallint\limits_{0}^\infty\lambda dF_\xi(\lambda)v,\,v\in H_\xi^\infty.
\end{equation*}
In view of the density of $H_\xi^\infty$ in $H_\xi$ (see Theorem 1.7.7 of \cite{FischerRuzhanskyBook}) and the uniqueness of the spectral measure, we have that  $F_\xi=E_\xi$ proving \eqref{B}.

Note that one of the fundamental consequences of \eqref{B}, is that for a measurable function $\phi$ on $\mathbb{R},$ one has the equality of operators
\begin{equation}\label{FBdelta}
\mathscr{F}[\phi(\mathcal{L})\delta](\xi)= \mathscr{F}(\smallint\limits_{0}^\infty \phi(\lambda) dE(\lambda)\delta)(\xi)=\smallint\limits_{0}^\infty \phi(\lambda) dE_\xi(\lambda)=\phi(\widehat{\mathcal{L}}(\xi)),    
\end{equation} on the class of smooth vectors $v\in H_\xi^\infty,$ such that $\smallint\limits_{0}^\infty|\phi(\lambda)|^2d\Vert E_\xi(\lambda) v\Vert_{H_\xi}^2<\infty.$
In particular, for any $s\in \mathbb{R},$ in the same class of smooth vectors one has the equality $\mathscr{F}[(1+\mathcal{L})^{\frac{s}{2}}\delta]=(1+\widehat{\mathcal{L}}(\xi))^{\frac{s}{2}}.$

\end{remark}
\begin{remark}
Now we discuss the hypothesis \eqref{Fouriergrowth:2} when the group $G$  is of polynomial growth and of type I. In view of \eqref{FBdelta}, via the Fourier transform we have
\begin{equation*}
   (1+\mathcal{L})^{\frac{d_0\theta}{4}}Tf(x)=\smallint_{\widehat{G}}\textnormal{Tr}[\xi(x)(1+\widehat{\mathcal{L}}(\xi))^{\frac{d_0\theta}{4}}\widehat{K}(\xi)\widehat{f}(\xi)]d\nu(\xi),\,f\in C^{\infty}_0(G). 
\end{equation*}  So, in view of the Plancherel formula, the estimate
\begin{equation}\label{FT:Cond:growth}
    \sup_{[\xi]\in \widehat{G}}\Vert (1+\widehat{\mathcal{L}}(\xi))^{\frac{d_0\theta}{4}}\widehat{K}(\xi) \Vert_{\textnormal{op}}<\infty,
\end{equation}is equivalent to the condition \eqref{Fouriergrowth:2}. In view of Theorem  \ref{main:th:2}, the Fourier transform condition \eqref{FT:Cond:growth}  on  $K$  together with oscillating condition
 \begin{equation}
        [K]_{H_{\infty,\theta}}:=\sup_{0<R<1}\sup_{|y|<R} \smallint\limits_{|x|\geq 2R^{1-\theta}}|K(xy^{-1})-K(x)|dx   <\infty,
    \end{equation}
imply the boundedness of $T$ from  $L^\infty(G)$ into $BMO(G).$ Moreover, if $K$ satisfies the following estimate
\begin{equation}\label{FT:Cond:growth2}
    \sup_{[\xi]\in \widehat{G}}\Vert \widehat{K}(\xi)(1+\widehat{\mathcal{L}}(\xi))^{\frac{d_0\theta}{4}} \Vert_{\textnormal{op}}<\infty,
\end{equation}
and the kernel condition  
\begin{equation}
        [K]_{H_{\infty,\theta}}':=\sup_{0<R<1}\sup_{|y|<R} \smallint\limits_{|x|\geq 2R^{1-\theta}}|K(y^{-1}x)-K(x)|dx  <\infty,
    \end{equation} then $T:H^1(G)\rightarrow L^1(G)$ extends to a bounded operator.
\end{remark}
\noindent{\textbf{Conflict of interests statement.}}   On behalf of all authors, the corresponding author states that there is no conflict of interest.

\bibliographystyle{amsplain}

\end{document}